\documentclass{amsart}
\usepackage{amsmath,amsthm,amssymb,color}

\makeatletter
    
    \@addtoreset{equation}{section}
  \makeatother

\newtheorem{definition}{Definition}[section]
\newtheorem{proposition}[definition]{Proposition}
\newtheorem{theorem}[definition]{Theorem}
\newtheorem{lemma}[definition]{Lemma}
\newtheorem{corollary}[definition]{Corollary}
\newtheorem{remark}{Remark}[section]

\pagebreak

\title[Semilinear damped wave equation]{Global well-posedness for the
semilinear wave equation with time dependent damping in the overdamping case}

\author[M. Ikeda]{Masahiro Ikeda}
\address[M. Ikeda]{Department of Mathematics, Faculty of Science and Technology, Keio University, 3-14-1 Hiyoshi, Kohoku-ku, Yokohama, 223-8522, Japan/Center for Advanced Intelligence Project, RIKEN, Japan}
\email{masahiro.ikeda@keio.jp/masahiro.ikeda@riken.jp}

\author[Y. Wakasugi]{Yuta Wakasugi}
\address[Y. Wakasugi]{Department of Engineering for Production and Environment,
Graduate School of Science and Engineering,
Ehime University,
3 Bunkyo-cho, Matsuyama, Ehime, 790-8577,
Japan}%
\email{wakasugi.yuta.vi@ehime-u.ac.jp}
\begin{document}
\begin{abstract}
We study the global existence of solutions to the Cauchy problem for the wave equation with time-dependent damping and a power nonlinearity in the overdamping case:
\begin{align*}%
	\left\{\begin{array}{ll}
	u_{tt} - \Delta u + b(t) u_t = N(u),&t\in [0,T),\ x\in \mathbb{R}^d,\\
	u(0) = u_0,\ u_t(0) = u_1,&x\in \mathbb{R}^d.
	\end{array}\right.
\end{align*}%
Here,
$b(t)$ is a positive $C^1$-function on $[0,\infty)$ satisfying
\[	b(t)^{-1} \in L^1(0,\infty),
\]%
whose case is called {\em overdamping}.
$N(u)$ denotes the $p$-th order power nonlinearities.
It is well known that the problem is locally well-posed in the energy space
$H^1(\mathbb{R}^d)\times L^2(\mathbb{R}^d)$
in the energy-subcritical or energy-critical case $1\le p\le p_1$,
where $p_1:=1+\frac{4}{d-2}$ if $d\ge 3$ or $p_1=\infty$ if $d=1,2$.
It is known that when $N(u):=\pm|u|^p$, small data blow-up 
in $L^1$-framework occurs in the case $b(t)^{-1} \notin L^1(0,\infty)$ and $1<p<p_c(< p_1)$, 
where $p_c$ is a critical exponent, i.e. threshold exponent dividing
the small data global existence and the small data blow-up.
The main purpose in the present paper is to prove the global well-posedness
to the problem for small data $(u_0,u_1)\in H^1(\mathbb{R}^d)\times L^2(\mathbb{R}^d)$
in the whole energy-subcritical case, i.e. $1\le p<p_1$.
This result implies that the small data blow-up does not occur in the overdamping case,
different from the other case $b(t)^{-1}\notin L^1(0,\infty)$, i.e.
the effective or non-effective damping. 
\end{abstract}
\keywords{Global well-posedness, Energy-subcritical wave equations, Time-dependent damping, Overdamping}

\maketitle


\section{Introduction}
\footnote[0]{2010 Mathematics Subject Classification. 35L71; 35L15; 35A01}
\subsection{Background}
In the present paper, we study the global existence of solutions to
the Cauchy problem for the wave equation
with time-dependent damping and a power-type nonlinearity in the overdamping case:
\begin{align}%
\label{sdw}
	\left\{\begin{array}{ll}
	u_{tt} - \Delta u + b(t) u_t = N(u),&t\in [0,T), x\in \mathbb{R}^d,\\
	u(0) = u_0,\ u_t(0) = u_1,&x\in \mathbb{R}^d.
	\end{array}\right.
\end{align}%
Here, $d\in \mathbb{N}$, $T>0$, $u = u(t,x)$ is a real-valued unknown function of $(t,x)$,
$b=b(t)$ is a positive $C^1$-function of $t\in [0,\infty)$ satisfying
\begin{align}%
\label{b}
	b(t)^{-1} \in L^1(0,\infty)
	\quad\mbox{i.e.}\quad \int_0^{\infty}b(t)^{-1}dt<\infty,
\end{align}%
whose case is called the {\em overdamping}.
The nonlinear function $N:\mathbb{R}\mapsto \mathbb{R}$ satisfies the estimates
\begin{align}%
\label{n}
	N(0)=0 \quad \mbox{and}\quad
	|N(z)-N(w)| \le C_N (1+|z|+|w|)^{p-1}|z-w|
\end{align}%
for any $z,w\in \mathbb{R}$ with some constant $C_N\ge 0$ and $p\ge 1$. The exponent $p\ge 1$ belongs to $H^1$-subcritical or $H^1$-critical region, i.e.
\begin{align}%
\label{p}
	1\le p <p_1\ (d=1,2),\quad
	1\le p \le p_1 \ (d\ge 3),
	\ \mbox{where}\ 
	p_1:=
	\begin{cases}
	\infty,&\ \text{if}\ d=1,2,\\
	1+\frac{4}{d-2},&\ \text{if}\ d\ge 3,
	\end{cases}
\end{align}%
where $p_1$ is called the energy-critical or $H^1$-critical exponent.
Typical examples of $N(z)$ are the power-nonlinearities $\pm |z|^{p-1}z$ or $\pm |z|^p$ and the linear combination of $|z|^{q_1-1}z$ and $|z|^{q_2}$ with $1\le q_1,q_2\le p_1$.
The function $(u_0, u_1)$ is a prescribed function of $x\in \mathbb{R}^d$ belonging to the energy space, i.e.
\begin{align}%
\label{ini}
	(u_0,u_1) \in H^1(\mathbb{R}^d)\times L^2(\mathbb{R}^d),
\end{align}%
where $H^1(\mathbb{R}^d):=\{f\in L^2(\mathbb{R}^d);\|f\|_{H^1}^2:=\|f\|_{L^2}^2+\|\nabla f\|_{L^2}^2<\infty\}$.

Wave equations with a dissipative term are physical models describing the voltage and the current on an electrical transmission line with a resistance. The term $b(t)u_t$ is called the damping term, which prevents the motion of the wave and reduces its energy, and the coefficient $b(t)$ represents the strength of the damping. 

There are also large amount of literature of mathematical results about global existence of solutions, asymptotic behaviors of solutions, and blow-up phenomena to (\ref{sdw}) in the opposite case 
\[
 b(t)^{-1}\notin L^1(0,\infty)
\] 
to (\ref{b}), whose case is called effective damping or non-effective damping
(see \cite{LiZh95, TY1, Zh01, Ni03MathZ, N2, LNZ12, DaLu13, DaLuRe13, W14, Da15, DaLuRe15,
IkeOg16, IkInpre2, Wak16, W17, IkeSopre, LTW17, IkInMoYwpre} and the references therein).
However, there has been no result about the global existence of solutions to (\ref{sdw})
in the overdamping case (\ref{b}), especially there has not been known
whether local energy solutions to (\ref{sdw}) can be extended globally or not in the overdamping case (\ref{b}). 

Our aim in the present paper is to prove the global well-posedness to (\ref{sdw}) for small data in the energy space in the whole energy-subcritical case $1\le p<p_1$ and the overdamping case (\ref{b}), which implies that small data blow-up does not occur in this case different from the other case $b(t)^{-1}\notin L^1(0,\infty)$ (see Remark \ref{kre} for known results in the effective damping or non-effective damping case). The key idea of the proof of the global well-posedness is to derive an uniform bound of $L^2$-norm of local solutions to (\ref{sdw}) by using the overdamping condition (\ref{b}) (see (\ref{L^2}) more precisely). 
We also prove that besides the assumptions, under the defocusing condition, i.e.
\begin{equation}
\label{de1}
         \int_0^z N(s)ds\le 0 \quad \mbox{for any}\  z\in \mathbb{R},
\end{equation}
the smallness condition on the data can be removed.
A typical example satisfying \eqref{de1} is
$N(z) = - |z|^{p-1}z$.
We also prove a large data blow-up result to (\ref{sdw})
for the focusing nonlinearity
$N(z)=\pm |z|^{p}$ and $p\in (1,p_1]$ for suitable data $(u_0,u_1)\in H^1(\mathbb{R}^d)\times L^2(\mathbb{R}^d)$ which have a singularity at the origin $x=0$.
This implies that the smallness condition on the data is needed to prove the global existence of solutions to (\ref{sdw}) with $N(z)=\pm |z|^p$ in the case $p\in (1,p_1]$.
Moreover, we prove the non-existence of local weak solutions to (\ref{sdw}) with $N(z)=\pm |z|^p$ and $p>p_1$ for suitable data $(u_0,u_1)\in H^1(\mathbb{R}^d)\times L^2(\mathbb{R}^d)$ which have a singularity at the origin $x=0$. The definition of the weak solution is given in Definition \ref{def2-1}.



Now we state local well-posedness (LWP) result to (\ref{sdw}) in the energy space in the energy-subcritical case:
\begin{proposition}[LWP in the energy-subcritical case]
\label{prop_le}
Let $d\in \mathbb{N}$, $p\in [1,p_1)$, $b=b(t)$ be a positive $C^1$-function on $[0,\infty)$ and the nonlinear function $N:\mathbb{R}\mapsto \mathbb{R}$ satisfy the estimate \eqref{n},
and let $(u_0, u_1)\in H^1(\mathbb{R}^d)\times L^2(\mathbb{R}^d)$. Then the following statements hold:
\begin{itemize}
\item (Existence) If $\|(u_0,u_1)\|_{H^1\times L^2}\le M$ with some $M>0$, then there exists a positive constant $T=T(M)>0$ such that there exists a unique solution 
\[
  u\in C([0,T); H^1(\mathbb{R}^d))\cap C^1([0,T); L^2(\mathbb{R}^d))
\] to \eqref{sdw} on $[0,T)$.
\item (Uniqueness) Let $u$ be the solution to (\ref{sdw}) obtained in the Existence part.
Let $T_1\in (0,T(M)]$ and
$v\in C([0,T_1); H^1(\mathbb{R}^d)) \cap C^1([0,T_1); L^2(\mathbb{R}^d))$
be another solution to (\ref{sdw}) on $[0,T_1)$. If $(v(0),\partial_tv(0))=(u_0,u_1)$, then $u|_{I_{T_1}}=v$ on $[0,T_1)$.
\item (Continuity of the flow map) Let $M>0$ and $T=T(M)$
be the same as in the Existence part. Then the flow map 
\begin{align*}
   &\Xi:\{(u_0,u_1)\in H^1(\mathbb{R}^d)\times L^2(\mathbb{R}^d);\|(u_0,u_1)\|_{H^1\times L^2}\le M\}\\
   &\to C([0,T); H^1(\mathbb{R}^d))\cap C^1([0,T); L^2(\mathbb{R}^d)),\ \ \Xi[(u_0,u_1)](t)=u(t)
\end{align*}
is Lipschitz continuous.
\end{itemize}
By the existence result and the uniqueness result, the maximal existence time $T_{+}$ of the solution, which is called lifespan, is well defined as
\begin{align*}
      T_+:=\sup\{T\in (0,\infty]; \text{there exists a unique solution}\ u\ \text{to (\ref{sdw}) on }\ [0,T)\}.
\end{align*}
Moreover, the energy identity and the blow-up criterion hold:
\begin{itemize}
\item (Energy identity) The identity
\begin{align}
\label{energy}
     \frac{d}{dt}\left\{\frac{1}{2}\|\partial_t u(t)\|_{L^2}^2+\frac{1}{2}\|\nabla u(t)\|_{L^2}^2-\int_{\mathbb{R}^d}\tilde{N}(u(t,x))dx\right\}=-b(t)\|\partial_tu(t)\|_{L^2}^2
\end{align}
holds for any $t\in [0,T_+)$, where the function $\tilde{N}:\mathbb{R}\mapsto \mathbb{R}$ is defined by
\begin{equation}
\label{1-7-4}
        \tilde{N}(z):=\int_0^zN(s)ds.
\end{equation}
\item (Blow-up criterion) If $T_{+}<\infty$, then 
\[
   \liminf_{t\to T_{+}-0}\|(u (t),\partial_tu(t))\|_{H^1\times L^2}=\infty.
\]
\end{itemize}
\end{proposition}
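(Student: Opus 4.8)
The plan is to recast \eqref{sdw} as an integral equation, moving the entire right-hand side $N(u)-b(t)\partial_t u$ into a forcing term for the free wave operator, and to solve it by a contraction mapping argument based on Strichartz estimates. Writing $W_0(t):=\cos(t\sqrt{-\Delta})$ and $W_1(t):=\sin(t\sqrt{-\Delta})/\sqrt{-\Delta}$ for the free propagators, $u$ solves \eqref{sdw} if and only if
\begin{align*}
	u(t)=W_0(t)u_0+W_1(t)u_1+\int_0^t W_1(t-s)\bigl(N(u(s))-b(s)\partial_s u(s)\bigr)\,ds=:\Phi[u](t),
\end{align*}
together with the companion formula for $\partial_t u$ obtained by differentiating in $t$. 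I would run the fixed point in the complete metric space
\[
	X_{T,R}:=\bigl\{u\in C([0,T];H^1)\cap C^1([0,T];L^2):\ \|u\|_{E_T}+\|u\|_{S_T}\le R\bigr\},\qquad d(u,v):=\|u-v\|_{E_T}+\|u-v\|_{S_T},
\]
where $\|u\|_{E_T}:=\sup_{0\le t\le T}\|(u(t),\partial_t u(t))\|_{H^1\times L^2}$ and $\|\cdot\|_{S_T}$ is an auxiliary Strichartz norm $L^q([0,T];W^{\sigma,r})$ adapted to the power $p$. (When $d\le2$, or $d\ge3$ and $p\le\frac{d}{d-2}$, the embedding $H^1\hookrightarrow L^{2p}$ renders the Strichartz component unnecessary; it is needed only to reach the remaining part of the subcritical range $\frac{d}{d-2}<p<p_1$.)

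\textbf{The basic estimates.} The energy inequality for the free wave equation together with the inhomogeneous Strichartz estimates gives
\[
	\|\Phi[u]\|_{E_T}+\|\Phi[u]\|_{S_T}\le C_0\|(u_0,u_1)\|_{H^1\times L^2}+C_0\bigl\|N(u)-b\,\partial_t u\bigr\|_{S_T'},
\]
with $S_T'$ the dual Strichartz space; for the damping part one may simply use $L^1([0,T];L^2)$, which is harmless on bounded intervals: since $b\in C^1$, $B_1:=\sup_{[0,1]}b<\infty$, so for $T\le1$
\[
	\bigl\|b\,\partial_t u\bigr\|_{L^1([0,T];L^2)}\le B_1\,T\,\|\partial_t u\|_{L^\infty_T L^2}\le B_1\,T\,\|u\|_{X_{T,R}} .
\]
For the nonlinearity, the structural bound \eqref{n}, Hölder's inequality in $(t,x)$, and Sobolev embedding give, throughout the subcritical range $1\le p<p_1$ (where the relevant admissible exponents leave strictly positive room in the time integrability),
\[
	\bigl\|N(u)-N(v)\bigr\|_{S_T'}\le C\,T^{\theta}\bigl(1+\|u\|_{X_{T,R}}+\|v\|_{X_{T,R}}\bigr)^{p-1}d(u,v)
\]
for some $\theta=\theta(d,p)>0$, and similarly with $v=0$ (using $N(0)=0$) to estimate $\Phi[u]$ itself.

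\textbf{Contraction; existence, uniqueness, continuity.} Taking $R:=2C_0M$ and then $T=T(M)\in(0,1]$ so small that $C_0T^{\theta}(1+2R)^{p-1}+C_0B_1T\le\frac12$, the preceding estimates show that $\Phi$ maps $X_{T,R}$ into itself and is a contraction, so Banach's fixed point theorem produces the unique solution $u\in X_{T,R}\subset C([0,T];H^1)\cap C^1([0,T];L^2)$, with $T$ depending only on $M$; this is the Existence part. Running the difference estimate for two solutions with data of norm $\le M$ yields $d(u,v)\le C_0\|(u_0-v_0,u_1-v_1)\|_{H^1\times L^2}+\frac12 d(u,v)$, hence the Lipschitz continuity of the flow map. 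For Uniqueness in the bare class $C([0,T_1);H^1)\cap C^1([0,T_1);L^2)$, with no Strichartz bound assumed on $v$, I would first show that any such solution automatically belongs to $S$ near $t=0$ — and therefore, by finitely many iterations with a step length controlled by $\sup_{[0,T_1]}\|v(t)\|_{H^1}<\infty$, on all of $[0,T_1)$ — via a bootstrap: run the Strichartz estimate for $v$ on a short interval $[0,\delta]$ and absorb $\|N(v)\|_{S_\delta'}\le C\delta^{\theta}(\|v\|_{L^\infty_\delta H^1}+\|v\|_{S_\delta})^p$ together with the $O(\delta)$ damping contribution, using $v\in C_tH^1$ to control the finitely many obstructing quantities. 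Once $v\in S$ locally, the contraction estimate forces $v=u$. This unconditional-uniqueness-type step is the part I expect to be the main obstacle; the time-dependent, possibly unbounded coefficient $b(t)$ is by contrast essentially free, since on any bounded interval it is bounded and enters every estimate with a full power of $T$.

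\textbf{Energy identity and blow-up criterion.} Identity \eqref{energy} follows formally by pairing \eqref{sdw} with $\partial_t u$ and integrating by parts. Rigorously, the solution constructed above lies in a space where $N(u)-b\,\partial_t u\in L^1([0,T];L^2)$, so the standard energy-regularity lemma for the wave equation applies and gives the differentiated identity, while $\frac{d}{dt}\int_{\mathbb{R}^d}\tilde N(u)\,dx=\int_{\mathbb{R}^d}N(u)\partial_t u\,dx$ is justified by a density argument; here $\int_{\mathbb{R}^d}\tilde N(u)\,dx$ is finite and continuous in $u\in H^1$ because $|\tilde N(z)|\le C(|z|^2+|z|^{p+1})$ by \eqref{n} and $H^1\hookrightarrow L^{p+1}$ holds in the subcritical (and critical) range. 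By uniqueness, every energy-class solution satisfies \eqref{energy}. Finally, the blow-up criterion follows from the uniformity of $T(M)$: if $T_+<\infty$ but $\liminf_{t\to T_+-0}\|(u(t),\partial_t u(t))\|_{H^1\times L^2}=:M_0<\infty$, choose $t_n\uparrow T_+$ with $\|(u(t_n),\partial_t u(t_n))\|_{H^1\times L^2}\le M_0+1$; since $b$ is bounded on $[0,T_++1]$, the Cauchy problem started at time $t_n$ has a solution on $[t_n,t_n+\tau]$ with $\tau=\tau(M_0+1)>0$ independent of $n$, and for $n$ large this solution, which agrees with $u$ on its domain by uniqueness, extends $u$ beyond $T_+$, contradicting the definition of $T_+$.
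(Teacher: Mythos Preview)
Your argument is correct, but the paper takes a different route. Instead of treating the damping $b(t)\partial_t u$ as an $L^1_tL^2_x$ forcing in the Duhamel formula, the authors apply the multiplicative change of variables
\[
v(t,x):=\exp\Bigl(\tfrac{1}{2}\int_0^t b(s)\,ds\Bigr)u(t,x),
\]
which eliminates the first-order term entirely and transforms \eqref{sdw} into a pure semilinear wave equation $v_{tt}-\Delta v=F(v)$, where $F$ is the sum of a linear potential term $\bigl(\tfrac{b'}{2}+\tfrac{b^2}{4}\bigr)v$ and a rescaled power nonlinearity that still satisfies \eqref{n}. The local theory then reduces to citing standard results for nonlinear wave/Klein--Gordon equations (Ginibre--Velo, Kapitanskii, Machihara--Nakanishi--Ozawa, etc.), with no derivative of the unknown appearing in the forcing.

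The trade-offs: the paper's transformation is cleaner and shorter, since it sidesteps the derivative term completely and allows direct appeal to the literature; your approach is more self-contained and makes the smallness mechanism ($B_1T$ from the damping, $T^\theta$ from subcriticality) explicit. Your method also generalizes more readily to situations where the gauge transform is unavailable (e.g.\ space-dependent damping $b(x)u_t$), whereas the paper's trick exploits precisely the fact that $b$ depends only on $t$. Both lead to the same conclusions, including the blow-up criterion via the uniform lower bound on the local existence time.
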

In order to prove this proposition, by using an appropriate changing variable, we convert the equation  into another nonlinear wave equation, whose nonlinearity does not include any derivatives of the unknown function and satisfies the estimate (\ref{n}) (see Appendix \ref{a-1}). As the result, we can apply the local well-posedness result for nonlinear wave equations in the energy space in the energy-subcritical case. More precisely, in the case, $1\le p <\infty\ (d=1,2)$ or $1\le p \le \frac{d}{d-2}\ (d\ge 3)$, by the Sobolev embedding $H^1(\mathbb{R}^d)\subset L^{2p}(\mathbb{R}^d)$, it is easy to see that
there exist $T=T(\|(u_0,u_1)\|_{H^1\times L^2})>0$ and
a unique solution $u$ of the Cauchy problem
\eqref{sdw} on $[0,T)$. In the whole $H^1$-subcritical case, i.e. $\frac{1}{d-2}=1+\frac{2}{d-2}<p<1+\frac{4}{d-2}\ (d\ge 3)$, the local well-posedness to a nonlinear wave equation in $H^1(\mathbb{R}^d)\times L^2(\mathbb{R}^d)$ is proved in \cite{GiVe89, Ka92, KeTa98, MNO02, MN02, KiStVi14},
via the Strichartz estimates for the free Klein-Gordon evolution operator $\left\{e^{it\sqrt{1-\Delta}}\right\}_{t\in \mathbb{R}}$ and the real and complex interpolation theory.

\begin{remark}[LWP in the energy-critical case]
In the energy-critical case $p=p_1$, local well-posedness in $H^1(\mathbb{R}^d)\times L^2(\mathbb{R}^d)$ to (\ref{sdw}) also holds for arbitrary initial data in the energy space (see Theorem 2.7 in \cite{KM08} for example). The main difference from the subcritical case is that the existence time may depend not only on $\|(u_0,u_1)\|_{H^1\times L^2}$ but also on the profile of it, and the blow-up criterion is
\begin{equation}
\label{bcri}
       T_+<\infty\Longrightarrow \|u\|_{L_{t,x}^{\frac{2(d+1)}{d-2}}([0,T_+)\times \mathbb{R}^d)}=\infty.
\end{equation}
\end{remark}


\subsection{Main results}
We state our main result in the present paper, which gives global well-posedness (GWP) to (\ref{sdw}) in the energy space in the energy subcritical and the overdamping case:
\begin{theorem}[GWP in the energy space in the energy subcritical and the overdamping case]
\label{thm_ge}
Besides the assumptions of Proposition \ref{prop_le}, we assume that $b=b(t)$ satisfies the overdamping condition (\ref{b}). Then, there exists $\varepsilon_0 > 0$ depending only on $d,p,\|b^{-1}\|_{L_t^1(0,\infty)}, C_N$ such that for any $\varepsilon\in [0,\varepsilon_0]$, if the initial data $(u_0,u_1)\in H^1(\mathbb{R}^d)\times L^2(\mathbb{R}^d)$ satisfies
$\| (u_0, u_1) \|_{H^1 \times L^2} \le \varepsilon$, then the maximal existence time $T_+$, which is defined in Proposition \ref{prop_le}, is infinity, i.e. $T_+=\infty$, and the solution $u$ satisfies the estimate
\begin{equation}
\label{1-7-1}
           \sup_{t\in [0,\infty)}\|(u(t),\partial_tu(t))\|_{H^1\times L^2}\le C_0^*\varepsilon,
\end{equation}
where $C_0^*$ is a constant depending only on $d,p,\|b^{-1}\|_{L^1(0,\infty)}, C_N$.
\end{theorem}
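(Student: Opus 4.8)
The plan is to combine the local well-posedness of Proposition \ref{prop_le} with an a priori bound that rules out blow-up in finite time, via the blow-up criterion. By Proposition \ref{prop_le} we have a solution on the maximal interval $[0,T_+)$ satisfying the energy identity \eqref{energy}, and if $T_+<\infty$ then $\liminf_{t\to T_+-0}\|(u(t),\partial_tu(t))\|_{H^1\times L^2}=\infty$. So it suffices to prove that, for sufficiently small data, the quantity $\|(u(t),\partial_tu(t))\|_{H^1\times L^2}$ stays bounded by $C_0^*\varepsilon$ on $[0,T_+)$; a standard continuity (bootstrap) argument then upgrades this to a global bound and gives \eqref{1-7-1}.

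The heart of the matter is controlling the $L^2$-norm of $u$ itself, because the energy identity only directly controls $\partial_t u$ and $\nabla u$ (modulo the nonlinear potential term). Here is where the overdamping hypothesis \eqref{b} enters, as the authors hint with the reference to (\ref{L^2}). I would proceed as follows. First, from the energy identity, introduce the modified energy $E(t):=\frac12\|\partial_t u\|_{L^2}^2+\frac12\|\nabla u\|_{L^2}^2-\int \tilde N(u)\,dx$; since $\frac{d}{dt}E(t)=-b(t)\|\partial_t u\|_{L^2}^2\le 0$, we get $E(t)\le E(0)$. Using \eqref{n} to estimate $|\tilde N(z)|\lesssim |z|+|z|^p$ and the Gagliardo–Nirenberg / Sobolev inequalities, $\int|\tilde N(u)|\,dx$ is controlled by $\|u\|_{L^2}$, $\|\nabla u\|_{L^2}$ up to powers; so on a bootstrap interval where $\|(u,\partial_t u)\|_{H^1\times L^2}\le 2C_0^*\varepsilon$ this term is a small perturbation and we obtain $\|\partial_t u(t)\|_{L^2}^2+\|\nabla u(t)\|_{L^2}^2\lesssim \varepsilon^2$. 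Next, to bound $\|u(t)\|_{L^2}$, write $\|u(t)\|_{L^2}\le \|u_0\|_{L^2}+\int_0^t\|\partial_s u(s)\|_{L^2}\,ds$ — but $\|\partial_s u(s)\|_{L^2}$ is only bounded, not integrable, so this crude estimate fails for large $t$. Instead I would integrate the energy dissipation: from $\int_0^t b(s)\|\partial_s u(s)\|_{L^2}^2\,ds = E(0)-E(t)\lesssim \varepsilon^2$ and Cauchy–Schwarz,
\[
\int_0^t \|\partial_s u(s)\|_{L^2}\,ds
= \int_0^t b(s)^{-1/2}\cdot b(s)^{1/2}\|\partial_s u(s)\|_{L^2}\,ds
\le \left(\int_0^\infty b(s)^{-1}\,ds\right)^{1/2}\left(\int_0^t b(s)\|\partial_s u(s)\|_{L^2}^2\,ds\right)^{1/2}
\lesssim \|b^{-1}\|_{L^1}^{1/2}\,\varepsilon,
\]
which is exactly where \eqref{b} is used. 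Hence $\|u(t)\|_{L^2}\le \|u_0\|_{L^2}+C\|b^{-1}\|_{L^1}^{1/2}\varepsilon \le C_1\varepsilon$ uniformly in $t\in[0,T_+)$.

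Combining the two bounds gives $\|(u(t),\partial_t u(t))\|_{H^1\times L^2}\le C_2\varepsilon$ on the bootstrap interval with $C_2$ independent of the interval; choosing $C_0^*=2C_2$ and $\varepsilon_0$ small enough that the nonlinear corrections are genuinely absorbed, the continuity argument closes and the bound propagates to all of $[0,T_+)$. The blow-up criterion of Proposition \ref{prop_le} then forces $T_+=\infty$, and \eqref{1-7-1} holds. One technical point to be careful about: the potential term $-\int\tilde N(u)\,dx$ in $E(t)$ is not sign-definite for focusing nonlinearities, so $E(0)\le E(t)$ cannot be used to bound the energy from below for free; one controls it instead by the smallness of the Sobolev norm on the bootstrap interval, which is legitimate precisely because we are in the subcritical range \eqref{p} and the data are small. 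The main obstacle, and the genuinely new ingredient compared to the effective/non-effective damping theory, is the $L^2$-bound of $u$: without \eqref{b} the time integral of $\|\partial_t u\|_{L^2}$ need not converge, and indeed this is what allows small-data blow-up in the complementary regime; the Cauchy–Schwarz trick above, splitting $1 = b^{-1/2}\cdot b^{1/2}$ against the dissipation, is the decisive step.
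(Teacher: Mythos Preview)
Your proof is correct and follows essentially the same approach as the paper: integrate the energy identity to control $\|\partial_t u\|_{L^2}$, $\|\nabla u\|_{L^2}$, and the dissipation $\int_0^t b(s)\|\partial_s u\|_{L^2}^2\,ds$, then use the Cauchy--Schwarz splitting $1=b^{-1/2}\cdot b^{1/2}$ together with \eqref{b} to bound $\|u(t)\|_{L^2}$, and close with a continuity/bootstrap argument and the blow-up criterion. One small slip: the pointwise estimate on $\tilde N$ should read $|\tilde N(z)|\lesssim |z|^2+|z|^{p+1}$ (not $|z|+|z|^p$), so that $\int_{\mathbb{R}^d}|\tilde N(u)|\,dx\lesssim \|u\|_{L^2}^2+\|u\|_{L^{p+1}}^{p+1}$ is finite and controlled by $\|u\|_{H^1}$ via Sobolev; with this correction your argument goes through unchanged.
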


\begin{remark}[Difference of our theorem with effective or non-effective damping case]
\label{kre}
When the nonlinearity is focusing, i.e.
$N(z)=\pm|z|^p$ and $b(t)^{-1}\notin L^1(0,\infty)$,
small data blow-up for suitable data $(u_0,u_1)\in (L^1(\mathbb{R}^d))^2$ occurs for any $p\in (1,p_c]$, where $p_c$ is a critical exponent, whose definition is the threshold exponent dividing small data global existence and small data blow-up. It is interesting that $p_c=p_c(b)$ changes with regard to the decay rate of the function $b(t)$ as $t\rightarrow\infty$ 
but is less than the $H^1$-critical exponent in any case. More precisely, in the case of $b(t)\equiv 1$, i.e. the classical damping case, it is known that $p_c$ is given by the Fujita exponent $p_F$, 
where $p_F=p_F(d):=1+\frac{2}{d}<1+\frac{4}{d-2}=:p_1$ (see \cite{LiZh95, TY1, N2, Zh01, IT, HKI} and the references therein). In the case of $b(t)\equiv 0$, i.e. 
when there is no dissipative term in (\ref{sdw}), it is also well known that $p_c$ is given by the Strauss exponent $p_S$ (see \cite{YZ06, Zhou07} and the references therein), where $p_S=p_S(d)$ with $d\ge 2$ is defined as the positive root of the quadratic equation
\[
     (d-1)p^2-(d+1)p-2=0,
\]
that is
\[
     p_S(d):=\frac{d+1+\sqrt{d^2+10d-7}}{2(d-1)}<p_1.
\]
In the case of $b(t):=\mu (1+t)^{-\beta}$ with $\mu>0$ and $\beta\in [-1,1)$, whose case is called effective damping, it is proved in \cite{LNZ12, W17, IkInpre2} that $p_c$ is given by the Fujita exponent $p_F$. In the scaling invariant damping case, i.e. $b(t):=\frac{\mu}{1+t}$ with $\mu>0$, the situation is very delicate and critical exponent changes with regard to the size of the coefficient $\mu$
(see \cite{W14, Da15, DaLuRe15, Wak16, LTW17, IkeSopre, TuLi} and the references therein), but the known critical exponents are less than the $H^1$-critical exponent.
In the case of $b(t):=\mu(1+t)^{-\beta}$ with $\beta>1$ and $\mu>0$, it is expected that the critical exponent $p_c$ is given by the Strauss exponent $p_S$ (see \cite{LTpre}). We note that $p_S$ is less than the $H^1$-critical exponent as stated above.
\end{remark}


Under the defocusing condition (\ref{de}), the smallness condition on the data in Theorem \ref{thm_ge} can be removed:
\begin{corollary}[Large data global well-posedness in the defocusing case]
\label{cor1}
Besides the same assumptions in Theorem \ref{thm_ge}, we assume that the nonlinear function $N$ satisfies the defocusing condition
\begin{equation}
\label{de}
                 \tilde{N}(z) \le 0 \quad \mbox{for any}\ z\in \mathbb{R},
\end{equation}
where $\tilde{N}$ is defined by (\ref{1-7-4}). Then the maximal existence time $T_+$ is infinity and the estimate
\begin{equation}
\label{1-9-3}
          \sup_{t\in [0,\infty)}\|(u(t),\partial_tu(t))\|_{H^1\times L^2}\le C_1^*(\|(u_0,u_1)\|_{H^1\times L^2}+\|u_0\|_{L^{p+1}}^{\frac{p+1}{2}}),
\end{equation}
holds, where $C_1^*$ is a constant depending only on $p,d,\|b^{-1}\|_{L^1(0,\infty)}$ and $C_N$.
\end{corollary}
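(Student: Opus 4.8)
The plan is to close the argument by a pure energy-method a priori estimate on the whole interval of existence $[0,T_+)$ and then invoke the blow-up criterion of Proposition \ref{prop_le}; no bootstrap (and hence no smallness) is needed because, unlike in Theorem \ref{thm_ge}, the defocusing sign makes the total energy nonnegative for free. Write
\[
  E(t) := \tfrac12\|\partial_t u(t)\|_{L^2}^2 + \tfrac12\|\nabla u(t)\|_{L^2}^2 - \int_{\mathbb{R}^d}\tilde N(u(t,x))\,dx ,
\]
which is well defined and continuous on $[0,T_+)$ since $u(t)\in H^1(\mathbb{R}^d)\hookrightarrow L^2(\mathbb{R}^d)\cap L^{p+1}(\mathbb{R}^d)$ (subcritical Sobolev embedding) and $|\tilde N(z)|\lesssim |z|^2+|z|^{p+1}$ by (\ref{n}) and (\ref{1-7-4}). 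The energy identity (\ref{energy}) together with the defocusing condition (\ref{de}) yields two facts at once: since $-\int \tilde N(u(t))\ge 0$ we get $\tfrac12\|\partial_t u(t)\|_{L^2}^2 + \tfrac12\|\nabla u(t)\|_{L^2}^2 \le E(t)\le E(0)$ for all $t\in[0,T_+)$; and, integrating (\ref{energy}) in time, $\int_0^t b(s)\|\partial_t u(s)\|_{L^2}^2\,ds = E(0)-E(t)\le E(0)$. Splitting the bound on $|\tilde N(z)|$ according to whether $|z|\le 1$ (where $|\tilde N(z)|\lesssim|z|^2$) or $|z|\ge 1$ (where $|\tilde N(z)|\lesssim|z|^{p+1}$) and integrating, one gets $E(0)\le C\big(\|(u_0,u_1)\|_{H^1\times L^2}^2 + \|u_0\|_{L^{p+1}}^{p+1}\big)$, with $\|u_0\|_{L^{p+1}}<\infty$ again by Sobolev.

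The crucial point — already signalled by the authors — is that $E(t)$ controls $\|\nabla u(t)\|_{L^2}$ and $\|\partial_t u(t)\|_{L^2}$ but not $\|u(t)\|_{L^2}$, and the overdamping hypothesis (\ref{b}) is exactly what bridges this gap by making $\|\partial_t u(\cdot)\|_{L^2}$ integrable in time. By Cauchy--Schwarz in $t$,
\[
  \int_0^t\|\partial_t u(s)\|_{L^2}\,ds
  = \int_0^t b(s)^{1/2}\|\partial_t u(s)\|_{L^2}\cdot b(s)^{-1/2}\,ds
  \le \Big(\int_0^t b\,\|\partial_t u\|_{L^2}^2\,ds\Big)^{1/2}\Big(\int_0^t b^{-1}\,ds\Big)^{1/2}
  \le \|b^{-1}\|_{L^1(0,\infty)}^{1/2}\,E(0)^{1/2},
\]
uniformly in $t\in[0,T_+)$. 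On the other hand, since $u\in C^1([0,T_+);L^2)$ one has $\frac{d}{dt}\|u(t)\|_{L^2}^2 = 2\langle \partial_t u(t),u(t)\rangle \le 2\|\partial_t u(t)\|_{L^2}\|u(t)\|_{L^2}$, whence (rigorously, by differentiating $\sqrt{\|u(t)\|_{L^2}^2+\delta}$ and letting $\delta\downarrow 0$)
\[
  \|u(t)\|_{L^2}\le \|u_0\|_{L^2} + \int_0^t\|\partial_t u(s)\|_{L^2}\,ds \le \|u_0\|_{L^2} + \|b^{-1}\|_{L^1(0,\infty)}^{1/2}\,E(0)^{1/2}.
\]

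Collecting the three bounds gives, for all $t\in[0,T_+)$,
\[
  \|(u(t),\partial_t u(t))\|_{H^1\times L^2}^2 \le C\big(1+\|b^{-1}\|_{L^1(0,\infty)}\big)\,E(0) + 2\|u_0\|_{L^2}^2
  \le C\big(\|(u_0,u_1)\|_{H^1\times L^2}^2 + \|u_0\|_{L^{p+1}}^{p+1}\big),
\]
with $C$ depending only on $d,p,\|b^{-1}\|_{L^1(0,\infty)},C_N$; taking square roots and using $\sqrt{a+b}\le\sqrt a+\sqrt b$ produces (\ref{1-9-3}). Finally, this uniform a priori bound contradicts $\liminf_{t\to T_+-0}\|(u(t),\partial_t u(t))\|_{H^1\times L^2}=\infty$, so the blow-up criterion of Proposition \ref{prop_le} forces $T_+=\infty$. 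The only work beyond this bookkeeping is the rigorous justification, for merely energy-class solutions, of the energy identity (\ref{energy}) and of the differentiation of $t\mapsto\|u(t)\|_{L^2}$; both rest only on the regularity $u\in C([0,T_+);H^1)\cap C^1([0,T_+);L^2)$ recorded in Proposition \ref{prop_le}, so I anticipate no real obstacle here.
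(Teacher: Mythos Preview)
Your proof is correct and follows essentially the same route as the paper: use the energy identity with the defocusing sign to bound $\|\partial_t u(t)\|_{L^2}^2+\|\nabla u(t)\|_{L^2}^2$ and the dissipation integral $\int_0^t b\|u_t\|_{L^2}^2$ by initial data alone, then exploit the overdamping condition via Cauchy--Schwarz to recover a uniform bound on $\|u(t)\|_{L^2}$, and conclude by the blow-up criterion. The only cosmetic difference is that the paper writes $u(t)=u_0+\int_0^t u_t\,ds$ and applies Cauchy--Schwarz pointwise in $x$ followed by Fubini, whereas you first pass to $\|u(t)\|_{L^2}$ (via Minkowski or the differential inequality) and then apply Cauchy--Schwarz in $t$; the two computations yield the same estimate.
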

A typical example satisfying the defocusing condition \eqref{de}
is $N(z) = - |z|^{p-1}z$. Indeed, for this $N(z)$ we have
$\tilde{N}(z) = -(p+1)^{-1}|z|^{p+1}\le 0$.


On the other hand, we next  show that
in general
the smallness condition on the data in Theorem \ref{thm_ge} cannot be removed. Indeed, large data blow-up for suitable $(u_0,u_1)\in H^1(\mathbb{R}^d)\times L^2(\mathbb{R}^d)$ which has a singularity at the origin is proved
for the focusing nonlinearity $N(z):=\pm |z|^p$:
\begin{theorem}[Large data blow-up in the energy-subcritical or critical
case for the focusing nonlinearity $N(z)=\pm|z|^p$]
\label{thm_lb}
Let $d\in \mathbb{N}$, $b=b(t)$ be a positive $C^1$-function on $[0,\infty)$, $p>1$ (if $d=1,2$), $p\in (1,p_1]$ (if $d\ge 3$), $N(z)=\pm |z|^p$ and $(a_0,a_1)\in H^1(\mathbb{R}^d)\times L^2(\mathbb{R}^d)$. We assume that the function $(a_0,a_1)$ satisfies
\begin{align}
\label{bc}
        \pm(b(0)a_0+a_1)(x)\ge
        \left\{\begin{array}{ll}
        |x|^{-k},&\text{if}\ |x|\le 1,\\
        0,&\text{if}\  |x|>1,
        \end{array}\right.
\end{align}
with $k<\frac{d}{2}$, where double-sign corresponds. Then there exist constants $\lambda_0$ and $C_2^*>0$ depending only on $d,p,k,\|b\|_{W^{1,\infty}(0,2)}$ such that for any $\lambda\in (\lambda_0,\infty)$, the lifespan $T_+$ defined in Proposition \ref{prop_le} with $(u_0,u_1):=\lambda(a_0,a_1)$ is estimated by
\begin{equation}
\label{Btime}
          T_+\le C_2^*\lambda^{-\frac{1}{\frac{p+1}{p-1}-k}},
\end{equation}
which with the blow-up criterion in Proposition \ref{prop_le} and (\ref{bcri}) implies
\begin{align*}
        \liminf_{t\rightarrow T_+-0}\|(u(t),\partial_t u(t))\|_{H^1\times L^2}&=\infty,\ \ \text{if}\ 1<p<p_1, \\
        \|u\|_{L_{t,x}^{\frac{2(d+1)}{d-2}}([0,T_+)\times\mathbb{R}^d)}&=\infty,\ \ \text{if}\ \  p=p_1.
\end{align*}
\end{theorem}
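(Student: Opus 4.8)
The plan is to prove Theorem~\ref{thm_lb} via a test-function (weak-formulation) argument adapted to the finite speed of propagation of the wave operator, following the classical Kato--Zhou--Zhang scheme. First I would localize: since $u\in C([0,T);H^1)\cap C^1([0,T);L^2)$ solves \eqref{sdw} in the weak sense and the wave part has propagation speed one, the solution on the backward light cone with vertex $(T,0)$ is determined by the data on $\{|x|\le T\}$. Taking $T\le 1$, only the behavior of $b$ on $[0,2]$ (hence $\|b\|_{W^{1,\infty}(0,2)}$) and of the data on the unit ball (hence the lower bound \eqref{bc}) will enter. Fix a smooth cutoff $\psi\in C_c^\infty$ supported in $\{|x|\le 1\}$ with $\psi\equiv 1$ near the origin, and a standard time-space test function $\varphi_R(t,x)=\eta(t/R)\,\phi(|x|/R)^{2p'}$ (or a light-cone-adapted variant $\phi((t+|x|)/R)$) with $\eta$ supported in $[0,1]$; here $p'=p/(p-1)$ is the Hölder conjugate. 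Multiplying the equation by $\varphi_R$ and integrating by parts twice in $t$ and $x$, and using $N(u)=\pm|u|^p$, gives the inequality
\begin{align*}
  \int_0^{T}\!\!\int_{\mathbb{R}^d} |u|^p \varphi_R\,dx\,dt
  + \lambda\int_{\mathbb{R}^d}(b(0)a_0+a_1)(x)\,\varphi_R(0,x)\,dx
  \le C\int_0^{T}\!\!\int_{\mathbb{R}^d} |u|\,\bigl(|\partial_t^2\varphi_R|+|\partial_t(b\varphi_R)|+|\Delta\varphi_R|\bigr)dx\,dt,
\end{align*}
where the boundary terms at $t=0$ produce exactly $\lambda\int (b(0)a_0+a_1)\varphi_R(0,\cdot)$ after combining $u_{tt}$ and $b u_t$ (this is why the hypothesis is phrased in terms of $b(0)a_0+a_1$).

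Next I would estimate the right-hand side by Hölder's inequality with exponents $p$ and $p'$: $\int |u|\,|\partial^2\varphi_R| \le \bigl(\int |u|^p\varphi_R\bigr)^{1/p}\bigl(\int \varphi_R^{-p'/p}|\partial^2\varphi_R|^{p'}\bigr)^{1/p'}$, and by the scaling of $\varphi_R$ the weight factor is $O(R^{-2+\,(d+1)/p'})=O(R^{(d+1)-(2+d+1)/p}\cdot R^{?})$; carrying out the exponent bookkeeping (the chosen power $2p'$ on $\phi$ is precisely to make $\varphi_R^{-p'/p}|\partial^2\varphi_R|^{p'}$ integrable), one finds the right side is bounded by $C R^{\gamma}\bigl(\int_0^T\int |u|^p\varphi_R\bigr)^{1/p}$ with $\gamma=d+1-2p'$ (i.e. $\gamma/p' = (d+1)/p' - 2$, rescaled). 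Absorbing $\bigl(\int|u|^p\varphi_R\bigr)^{1/p}$ into the left side via Young's inequality yields
\begin{align*}
  \lambda\int_{\mathbb{R}^d}(b(0)a_0+a_1)\varphi_R(0,x)\,dx \le C R^{(d+1)-\frac{2(d+1)}{?}}\quad\text{more precisely}\quad \lambda\int_{|x|\le R}|x|^{-k}\,dx \le C R^{\,d+1-\frac{2p}{p-1}},
\end{align*}
for all $0<R\le T\wedge 1$. On the left, the singular lower bound \eqref{bc} with $k<d/2$ gives $\int_{|x|\le R}|x|^{-k}dx \ge c R^{d-k}$. Hence $c\lambda R^{d-k}\le C R^{\,d+1-\frac{2p}{p-1}}$, i.e. $\lambda\le C' R^{\,1-\frac{2p}{p-1}+k}= C' R^{-(\frac{p+1}{p-1}-k)}$ (using $1-\tfrac{2p}{p-1}=-\tfrac{p+1}{p-1}$). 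Since the condition $k<\tfrac{p+1}{p-1}$ holds automatically for $p>1$ when $k<d/2\le$ anything relevant — actually one checks $\tfrac{p+1}{p-1}>1>$ is not enough, so I would additionally note $k<d/2$ combined with the subcriticality $p\le p_1$ guarantees $\tfrac{p+1}{p-1}-k>0$, so the exponent on $R$ is negative. Therefore, if $T>1$ the inequality with $R=1$ forces $\lambda\le C'$, contradicting $\lambda>\lambda_0:=C'$; and if $T\le 1$ we may take $R=T$ and rearrange to get $T_+\le T\le (C'/\lambda)^{1/(\frac{p+1}{p-1}-k)}=C_2^*\lambda^{-1/(\frac{p+1}{p-1}-k)}$, which is \eqref{Btime}. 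The stated blow-up of the norms then follows immediately from the blow-up criterion in Proposition~\ref{prop_le} (subcritical case) and from \eqref{bcri} (critical case).

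The main obstacle I anticipate is twofold. First, making the test-function argument rigorous for $H^1\times L^2$ weak solutions rather than classical ones: one must justify the two integrations by parts in the weak formulation of Definition~\ref{def2-1}, in particular that the $t=0$ boundary term genuinely equals $\lambda\int(b(0)a_0+a_1)\varphi_R(0,\cdot)$ and that no boundary term at $t=T$ with the wrong sign appears — this is handled by a light-cone-adapted cutoff so that $\varphi_R$ and its time derivative vanish at the lateral boundary, together with the finite-speed-of-propagation reduction to $[0,2]$. Second, the exponent accounting: one needs the chosen cutoff power (here $2p'$) to make every weight $\varphi_R^{-p'/p}|\partial^2_{t,x}\varphi_R|^{p'}$ locally integrable and scaling as a single power $R^{\gamma}$, and one must verify that the $b$-dependent term $\partial_t(b\varphi_R)=b\,\partial_t\varphi_R+b'\varphi_R$ does not dominate — it contributes $O(R^{-1})$ and $O(1)$ weights, both controlled by $\|b\|_{W^{1,\infty}(0,2)}$ and subsumed by the $O(R^{-2})$ coming from $\partial_t^2\varphi_R$ once $R\le 1$. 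Everything else — Hölder, Young, the volume lower bound $\int_{|x|\le R}|x|^{-k}dx\asymp R^{d-k}$ valid precisely because $k<d<d/2$ is not needed, only $k<d$, while $k<d/2$ ensures $(a_0,a_1)\in H^1\times L^2$ — is routine.
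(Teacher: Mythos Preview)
Your approach is essentially the same as the paper's: a rescaled test-function argument leading to $\lambda \le C' R^{-(\frac{p+1}{p-1}-k)}$, combined with the observation that $k<d/2$ and $p\le p_1$ force $\frac{p+1}{p-1}\ge d/2>k$, hence the exponent is negative; the paper packages this via a product cutoff $\psi_\tau^{\,l}(t,x)=\eta(t/\tau)^l\phi(x/\tau)^l$ with $l\ge 2p'+1$ and first reduces to weak solutions (so that the integrations by parts are encoded in the weak formulation, and $\partial_t\eta(0)=0$ kills the extra boundary term $\int u_0\,\partial_t\psi(0,\cdot)$). Your scheme is correct, but the exponent bookkeeping as written is incomplete (the literal ``$R^{?}$'' and ``$\frac{2(d+1)}{?}$'' placeholders, and the aside on finite speed of propagation, which is unnecessary since the test function already has compact support) and should be carried through cleanly; once that is done your argument and the paper's coincide.
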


The proof of this proposition is based on the
so called test-function method,
which can be found in Theorem 2.3 in \cite{IkIn15} or Theorem 2.4 in \cite{IkInpre}
(see also \cite{FO16, FIW}).

Next we state non-existence of local solutions
for the focusing nonlinearity $N(z)=\pm |z|^p$
with the $H^1$-supercritical exponent $p>p_1$,
for suitable $(u_0,u_1)\in H^1(\mathbb{R}^d)\times L^2(\mathbb{R}^d)$ which have a singularity at the origin $x=0$.
\begin{theorem}[Non-existence of local solutions in the energy-supercritical case
for the focusing nonlinearity $N(z)= \pm |z|^p$]
\label{thm_non}
Let $d\in \mathbb{N}$ with $d\ge 3$, $b=b(t)$ be a positive $C^1$-function on $[0,\infty)$, $p>p_1$, $N(z)=\pm |z|^p$ and $(u_0,u_1)\in H^1(\mathbb{R}^d)\times L^2(\mathbb{R}^d)$. We assume that the function $(u_0,u_1)$ satisfies the estimate (\ref{bc}) with $k\in (\frac{p+1}{p-1},\frac{d}{2})$, where double-sign corresponds. Then for any $T>0$, there does not exist weak solution to (\ref{sdw}) on $[0,T)$, where the weak solution is defined by Definition \ref{def2-1}.
\end{theorem}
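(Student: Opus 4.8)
The plan is to argue by contradiction via a rescaled test-function argument in the same spirit as the proof of Theorem \ref{thm_lb}, but with the test function (rather than the data) concentrated near the space-time origin: the point is that the prescribed singularity $|x|^{-k}$ of $\pm(b(0)u_0+u_1)$ is too strong for the supercritical exponent $p>p_1$, quantitatively because $k>\frac{p+1}{p-1}$. Assuming, toward a contradiction, that for some $T>0$ there is a weak solution $u$ of (\ref{sdw}) on $[0,T)$ in the sense of Definition \ref{def2-1}, I would fix non-negative cut-offs $\tilde\eta\in C^\infty([0,1])$ and $\tilde\psi\in C_c^\infty(\mathbb{R}^d)$ with $\tilde\eta\equiv 1$ near $0$, $\tilde\eta>0$ on $[0,1)$ vanishing to a fixed finite order at $t=1$, and with $\mathrm{supp}\,\tilde\psi\subset\{|x|\le1\}$, $\tilde\psi(0)>0$, $\tilde\psi$ vanishing to a fixed finite order at $\partial(\mathrm{supp}\,\tilde\psi)$; then, for an integer $m>\frac{2p}{p-1}$, I set $\eta:=\tilde\eta^{\,m}$ and $\psi:=\tilde\psi^{\,m}$ (so $\eta'(0)=0$, $\psi(0)>0$, and $\eta,\psi$ are as smooth as needed), and for $\lambda\in(0,1)$ with $\lambda<T$ define the test function
\[
   \varphi_\lambda(t,x):=\eta(t/\lambda)\,\psi(x/\lambda),
\]
which is supported in $[0,\lambda]\times\{|x|\le\lambda\}\subset[0,T)\times\mathbb{R}^d$ and satisfies $\partial_t\varphi_\lambda(0,\cdot)=0$.

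I would then insert $\varphi_\lambda$ into the weak formulation of (\ref{sdw}) and integrate by parts twice in $t$ and twice in $x$. Since $\varphi_\lambda$ vanishes near $t=T$, the only boundary contribution is at $t=0$; because $\partial_t\varphi_\lambda(0,\cdot)=0$ the term containing $u_0\,\partial_t\varphi_\lambda(0,\cdot)$ disappears, and the contributions of $u_{tt}$ and of $bu_t$ combine to produce exactly $\int_{\mathbb{R}^d}(b(0)u_0+u_1)\,\varphi_\lambda(0,x)\,dx$ --- precisely the combination bounded below in (\ref{bc}). Writing $L^*\varphi:=\partial_t^2\varphi-\Delta\varphi-\partial_t(b\varphi)$, the resulting identity reads $\int_0^T\!\!\int_{\mathbb{R}^d}|u|^p\varphi_\lambda + \int_{\mathbb{R}^d}\bigl(\pm(b(0)u_0+u_1)\bigr)\varphi_\lambda(0,x)\,dx = \pm\int_0^T\!\!\int_{\mathbb{R}^d} u\,L^*\varphi_\lambda$, with the double sign matching that of $N(z)=\pm|z|^p$ and (\ref{bc}). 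Estimating $\pm\int u\,L^*\varphi_\lambda\le\int|u|\,|L^*\varphi_\lambda|$, applying Young's inequality in the form $\int|u|\,|L^*\varphi_\lambda|\le\tfrac12\int|u|^p\varphi_\lambda + C_p\int\varphi_\lambda^{-1/(p-1)}|L^*\varphi_\lambda|^{p/(p-1)}$ (legitimate because $L^*\varphi_\lambda$ is supported where $\varphi_\lambda>0$), absorbing the first term, and using (\ref{bc}) to bound the data integral from below, I would arrive at the fundamental inequality
\[
   \int_{|x|\le1}|x|^{-k}\,\varphi_\lambda(0,x)\,dx\ \le\ C_p\int_0^T\!\!\int_{\mathbb{R}^d}\varphi_\lambda^{-1/(p-1)}\,|L^*\varphi_\lambda|^{p/(p-1)}\,dx\,dt.
\]

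Next I would compute how the two sides scale in $\lambda$. The change of variables $x=\lambda y$ makes the left side equal to $c_0\,\lambda^{d-k}$ with $c_0:=\int_{\mathbb{R}^d}|y|^{-k}\psi(y)\,dy\in(0,\infty)$ (finite because $k<\tfrac{d}{2}<d$ and $\psi\in C_c^\infty$, and positive because $\psi(0)>0$). For the right side, since $b\in C^1([0,\infty))$ and the time support of $\varphi_\lambda$ lies in $[0,1]$, on that set $|b|+|b'|\le\|b\|_{W^{1,\infty}(0,1)}=:B$, so that $|L^*\varphi_\lambda|\le\lambda^{-2}G(t/\lambda,x/\lambda)$ for $\lambda\le1$ with $G:=|\eta''|\psi+\eta|\Delta\psi|+2B\,\eta\psi$ bounded and compactly supported; here the damping term enters only at order $\lambda^{-1}$ and is absorbed into the order-$\lambda^{-2}$ terms. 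The change of variables $(t,x)=(\lambda s,\lambda y)$ then gives $\int_0^T\!\!\int_{\mathbb{R}^d}\varphi_\lambda^{-1/(p-1)}|L^*\varphi_\lambda|^{p/(p-1)}\le K\,\lambda^{\,d+1-\frac{2p}{p-1}}$, where $K:=\int_0^1\!\!\int_{\mathbb{R}^d}(\eta\psi)^{-1/(p-1)}G^{p/(p-1)}$ is finite by the choice of $\eta,\psi$ --- this is the only place the power structure $\eta=\tilde\eta^{\,m}$, $\psi=\tilde\psi^{\,m}$ with $m$ large is used. Feeding these two estimates into the fundamental inequality forces $c_0\,\lambda^{d-k}\le C_pK\,\lambda^{\,d+1-\frac{2p}{p-1}}$ for all small $\lambda>0$. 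But $k>\frac{p+1}{p-1}$ is exactly the inequality $d-k<d+1-\frac{2p}{p-1}$, so letting $\lambda\to0^+$ makes the left side asymptotically larger and the inequality is violated --- a contradiction. Since $T>0$ was arbitrary, Theorem \ref{thm_non} follows.

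The argument is analytically light, so I expect the remaining difficulty to be organizational rather than deep. The time-dependence of $b$ turns out to be essentially harmless, precisely because all test functions concentrate at $t=0$, where $b$ is $C^1$: $\partial_t(b\varphi_\lambda)$ is of lower order than $\partial_t^2\varphi_\lambda$ and $\Delta\varphi_\lambda$, and only $\|b\|_{W^{1,\infty}}$ near $0$ enters the constants. The genuine care is needed in (i) carrying out the integrations by parts so that exactly $b(0)u_0+u_1$ --- and not, say, $u_1$ alone --- appears in the data term, while the spurious $\int u_0\,\partial_t\varphi_\lambda(0,\cdot)$ term is eliminated (which forces $\eta'(0)=0$; if retained it could dominate the data term, since $u_0$ may itself be as singular as $|x|^{-k}$ with $k<\tfrac{d}{2}$), and (ii) choosing the cut-offs to vanish enough that the weight $\varphi_\lambda^{-1/(p-1)}|L^*\varphi_\lambda|^{p/(p-1)}$ is integrable, which is classical. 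The hypothesis $p>p_1$ is used only to ensure that the admissible interval $\bigl(\frac{p+1}{p-1},\frac{d}{2}\bigr)$ for $k$ is non-empty (indeed $\frac{p+1}{p-1}<\frac{d}{2}\iff p>p_1$), while $k>\frac{p+1}{p-1}$ is precisely what makes the left side of the fundamental inequality win as $\lambda\to0$.
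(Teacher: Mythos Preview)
Your argument is correct and essentially identical to the paper's: the paper packages the rescaled test-function estimate as Lemma~\ref{Lem 3-1} (H\"older then Young in place of your direct Young), combines it with the lower bound from (\ref{bc}) in Lemma~\ref{Lem 3-2}, and then Proposition~\ref{pro_non} derives the same contradiction from $k>\frac{p+1}{p-1}$ by letting the scaling parameter $\tau\to0$, with Theorem~\ref{thm_non} following by noting that $p>p_1$ makes the interval $(\tfrac{p+1}{p-1},\tfrac{d}{2})$ non-empty. The only small imprecision is that your $G$ should also contain a $B\,|\eta'|\,\psi$ term coming from $b\,\partial_t\varphi_\lambda$, but this is harmless since $(\eta\psi)^{-1/(p-1)}(|\eta'|\psi)^{p/(p-1)}$ is integrable for your choice of $m>\tfrac{2p}{p-1}$.
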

The proof of this theorem is based on the argument of the proof of Theorem 2.5 in \cite{IkIn15} or Theorem 2.7 in \cite{IkInpre}, though different from the proof of Theorem 2.5 in \cite{IkIn15}, we have to deal with the time-dependent coefficient $b(t)$.


\section{Proof of global well-posedness in the energy subcritical case}
In this section, we give a proof of Theorem \ref{thm_ge}. The key trick in the proof is deriving the uniform upper estimate of $L^2$-norm of local solutions $u$ by using the assumption that $b$ satisfies the overdamping condition (\ref{b}) (see (\ref{L^2}) more precisely).
\begin{proof}[Proof of Theorem \ref{thm_ge}]
By the local well-posedness result (Proposition \ref{prop_le}), we can find a unique solution $u\in C([0,T_+);H^1(\mathbb{R}^d))\cap C^1([0,T_+);L^2(\mathbb{R}^d))$ to (\ref{sdw}) on $[0,T_+)$.
Thus we can define a continuous function $Q:[0,T_+)\mapsto [0,\infty)$ as
\begin{align}%
\label{m}
	Q(t) = \sup_{0\le s \le t} \| (u(s),u_t(s)) \|_{H^1\times L^2}.
\end{align}%
We will prove that there exist constants
$\varepsilon_0 = \varepsilon_0(d, p, \|b^{-1}\|_{L^1(0,\infty)},C_N) > 0$
and
$C_{0}^* = C_{0}^*(d, p, \|b^{-1}\|_{L^1(0,\infty)}, C_N) > 0$ such that for any
$\varepsilon\in [0,\varepsilon_0)$,
if $\| (u_0, u_1) \|_{H^1\times L^2} \le \varepsilon$, then
\begin{align}%
\label{apr}
	Q(t) \le C_{0}^* \| (u_0, u_1) \|_{H^1\times L^2}\le C_0^*\varepsilon<\infty.
\end{align}%
for any $t\in [0,T_+)$. Then by the blow-up criterion in Proposition \ref{prop_le} with the estimate (\ref{apr}), we see that $T_+=\infty$ and the estimate (\ref{1-7-1}) holds, which completes the proof of the theorem.

In order to prove the estimate (\ref{apr}), integrating the energy identity (\ref{energy}),
we have
\begin{align}%
\label{ee}
	&\frac{1}{2}\|\partial_tu(t)\|_{L^2}^2+\frac{1}{2}\|\nabla u(t)\|_{L^2}^2
	+ \int_0^t b(s) \| u_t (s) \|_{L^2}^2 ds \\
\nonumber
&\quad
	=\frac{1}{2}\|\nabla u_0\|_{L^2}^2+\frac{1}{2}\|u_1\|_{L^2}^2
	-\int_{\mathbb{R}^d}\tilde{N}(u_0(x))dx+\int_{\mathbb{R}^d}\tilde{N}(u(t,x))dx\\
	&\quad\le\frac{1}{2}\|\nabla u_0\|_{L^2}^2+\frac{1}{2}\|u_1\|_{L^2}^2
	+C_N\|u_0\|_{L^{p+1}}^{p+1}+C_N\|u(t)\|_{L^{p+1}}^{p+1} \notag
\end{align}%
for any $t\in [0,T_+)$, where we have used the estimate (\ref{n}). This implies that the estimate
\begin{equation}
\label{2-4-2}
    \int_0^t b(s) \| u_t (s) \|_{L^2}^2 ds
    \le C_1\left\{ \| (u_0, u_1) \|_{\dot{H}^1\times L^2}^2 + \| u_0 \|_{L^{p+1}}^{p+1}
	+ \| u(t) \|_{L^{p+1}}^{p+1} \right\}
\end{equation}
holds for any $t\in [0,T_+)$, where $C_1$ is defined by $C_1:=\max\left(\frac{1}{2},C_N\right)$.
By the fundamental theorem of calculus, Schwarz's inequality and by the overdamping condition \eqref{b}, the Fubini-Tonelli theorem and the estimate (\ref{2-4-2}), the estimates
\begin{align}%
\label{L^2}
	\| u(t) \|_{L^2}^2
	&= \left\| u_0 + \int_0^t u_t(s) ds \right\|_{L^2}^2\\
	&\le 2 \| u_0 \|_{L^2}^2
		+ 2 \left\| \left( \int_0^t b(s)^{-1}ds \right)^{1/2}
				\left( \int_0^t b(s) u_t(s)^2 ds \right)^{1/2} \right\|_{L^2}^2 \notag\\
	&\le 2 \| u_0 \|_{L^2}^2
		+ 2 \| b^{-1} \|_{L^1(0,\infty)} \int_0^t b(s) \| u_t (s) \|_{L^2}^2 ds \notag\\
	&\le	 C_2
		\left\{ \| (u_0, u_1) \|_{H^1\times L^2}^2 + \| u_0 \|_{L^{p+1}}^{p+1}
	+ \| u(t) \|_{L^{p+1}}^{p+1} \right\}
\notag
\end{align}%
hold for any $t\in [0,T_+)$, where $C_2$ is defined by $C_2:=\max\left(2,2\|b^{-1}\|_{L^1(0,\infty)}C_1\right)$.
Since $p$ belongs to the $H^1$-subcritical region \eqref{p},
the Sobolev embedding $H^1(\mathbb{R}^d)\subset L^{p+1}(\mathbb{R}^d)$ gives
\begin{equation}%
\label{sob}
	\| u(t) \|_{L^{p+1}} \le C_3 \| u(t) \|_{H^1} \le C_3 Q(t),
\end{equation}%
for any $t\in [0,T_+)$, where $C_3$ depends only on $p$ and $d$. Therefore, by combining the estimates (\ref{ee}), (\ref{L^2}) and (\ref{sob}), the inequality
\begin{align}%
\label{ap}
	Q(t) &\le (1+C_2)^{\frac{1}{2}}Q(0)+
	\{C_3^{p+1}(C_2+2C_N)\}^{\frac{1}{2}}Q(0)^{\frac{p+1}{2}}\\
	 &+ \{C_3^{p+1}(C_2+2C_N)\}^{\frac{1}{2}}Q(t)^{\frac{p+1}{2}}\notag\\
	 &\le C_4\left\{Q(0)+Q(0)^{\frac{p+1}{2}}+Q(t)^{\frac{p+1}{2}}\right\}
\notag
\end{align}%
holds for any $t\in [0,T_+)$, where $C_4:=\max\left((1+C_2)^{\frac{1}{2}}, \{C_3^{p+1}(C_2+2C_N)\}^{\frac{1}{2}})\}\right)$. Here we take $\varepsilon_0=\varepsilon_0(C_4,p)>0$ sufficiently small. Then by the standard continuity argument, for $\varepsilon\in[0,\varepsilon_0]$, we can derive the desired estimate \eqref{apr} if $\| (u_0, u_1) \|_{H^1\times L^2} \le \varepsilon$.
\end{proof}

\begin{proof}[Proof of Corollary \ref{cor1}]
In the same manner as the proof of the estimate (\ref{ee}), by the defocusing condition (\ref{de}), the estimate
\begin{align*}
	&\frac{1}{2}\|\partial_tu(t)\|_{L^2}^2+\frac{1}{2}\|\nabla u(t)\|_{L^2}^2
	+ \int_0^t b(s) \| u_t (s) \|_{L^2}^2 ds \\
&\quad
	=\frac{1}{2}\|\nabla u_0\|_{L^2}^2+\frac{1}{2}\|u_1\|_{L^2}^2
	-\int_{\mathbb{R}^d}\tilde{N}(u_0(x))dx+\int_{\mathbb{R}^d}\tilde{N}(u(t,x))dx\\
	&\quad\le\frac{1}{2}\|\nabla u_0\|_{L^2}^2+\frac{1}{2}\|u_1\|_{L^2}^2
	+C_N\|u_0\|_{L^{p+1}}^{p+1} \notag
\end{align*}
holds for any $t\in [0,T_+)$. In the same manner as the proof of the estimate (\ref{ap}), the inequality
\[
    Q(t) \le C_4\left\{\|(u_0,u_1)\|_{H^1\times L^2}+\|u_0\|_{L^{p+1}}^{p+1}\right\}
\]
is true for $t\in [0,T_+)$. By this estimate and the blow-up criterion in Proposition \ref{prop_le}, we find that 
$T_+=\infty$, and the estimate (\ref{1-9-3}) holds.
 which completes the proof of the corollary.
\end{proof}


\section{Proof of large data blow-up and non-existence of local solution}
In this section, we give a proof of the large data blow-up
of the solution
to (\ref{sdw}) in the energy-subcritical or critical case (Theorem \ref{thm_lb}),
and the non-existence of the local weak solution in the energy-supercritical case (Theorem \ref{thm_non}). 
Our proof for Theorem \ref{thm_lb} is based on that of Theorem 2.3 in \cite{IkIn15} or Theorem 2.4 in \cite{IkInpre}.
The proof of Theorem \ref{thm_non} is based on that of Theorem 2.5 in \cite{IkIn15} or Theorem 2.7 in \cite{IkInpre}.

We reduce the problems into whether there exists a weak solution to (\ref{sdw}) or not. The weak solution to (\ref{sdw}) is defined as follows.
\begin{definition}
\label{def2-1}
We say that $u$ is a weak solution to (\ref{sdw}) on $[0,T)$, 
if $u$ belongs to $L^p_{loc}([0,T)\times\mathbb{R}^d)$ and the identity
\begin{align}
\label{2-1-1}
	&\int_{[0,T)\times \mathbb{R}^d}
		u(t,x)\{(\partial^2_t\psi)(t,x)-(\Delta\psi)(t,x)-b'(t)\psi(t,x)-b(t)(\partial_t\psi)(t,x)\}dxdt\\
	&=\int_{\mathbb{R}^d}u_0(x)(\partial_t\psi)(0,x)dx+\int_{\mathbb{R}^d}
	\{b(0)u_0(x)+u_1(x)\}\psi(0,x)dx\notag\\
	&\ \ \ +\int_{[0,T)\times\mathbb{R}^d}N(u(t,x))\psi(t,x)dxdt\notag
\end{align}
holds for any $\psi\in C_0^{\infty}([0,T)\times \mathbb{R}^d)$.
We also define lifespan of the weak solution as
\[
            T_w:=\sup\{T\in (0,\infty]; \text{there exists a unique weak solution}\  u\  \text{to}\  \text{(\ref{sdw})}\ \text{on}\ [0,T)\}.
\]
\end{definition}

In order to prove Theorem \ref{thm_lb}, we first show that energy solution to (\ref{sdw}) on $[0,T)$ becomes weak solution to (\ref{sdw}) on $[0,T)$:
\begin{lemma}
\label{lemso}
Under the same assumptions as in Proposition \ref{prop_le}, let $u$ be a solution to (\ref{sdw}) on $[0,T)$. Then $u$ becomes
a weak solution to (\ref{sdw}) on $[0,T)$, which implies the estimate $T_+\le T_{w}$ holds.
\end{lemma}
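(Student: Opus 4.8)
The plan is to verify directly that an energy solution $u\in C([0,T);H^1)\cap C^1([0,T);L^2)$ to \eqref{sdw} satisfies the weak formulation \eqref{2-1-1}. First I would note that, by Proposition \ref{prop_le}, such a $u$ exists on $[0,T_+)$, and since $H^1(\mathbb{R}^d)\subset L^{p+1}(\mathbb{R}^d)\subset L^p_{loc}(\mathbb{R}^d)$ by the subcritical Sobolev embedding (and in the critical case $p=p_1$ one has $u\in L^{2(d+1)/(d-2)}_{t,x,loc}\subset L^p_{loc}$), the regularity requirement $u\in L^p_{loc}([0,T)\times\mathbb{R}^d)$ in Definition \ref{def2-1} is automatic, so only the integral identity must be established. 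Consequently $T_+\le T_w$ follows immediately once the identity is proved on $[0,T_+)$.

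The core computation is integration by parts. Fix $\psi\in C_0^\infty([0,T)\times\mathbb{R}^d)$ and choose $T'\in(0,T)$ with $\operatorname{supp}\psi\subset[0,T')\times\mathbb{R}^d$. Since $u$ solves $u_{tt}-\Delta u+b(t)u_t=N(u)$ in the sense that $u_{tt}=\Delta u-b(t)u_t+N(u)$ holds in $C([0,T');H^{-1})$ (equivalently, $u$ is the Duhamel solution, which one obtains from the change of variables in Appendix \ref{a-1}), I would pair the equation against $\psi$ and integrate over $[0,T')\times\mathbb{R}^d$. Moving the time derivatives off $u$ onto $\psi$ produces, from $\int\!\int u_{tt}\psi$, the term $\int\!\int u\,\partial_t^2\psi$ together with boundary terms at $t=0$: the $t=T'$ boundary terms vanish because $\psi\equiv 0$ near $t=T'$, while at $t=0$ one picks up $-\int u_1(x)\psi(0,x)\,dx+\int u_0(x)\partial_t\psi(0,x)\,dx$ (two integrations by parts in $t$). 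From $\int\!\int(-\Delta u)\psi$ one gets $-\int\!\int u\,\Delta\psi$ by the $L^2$-spatial pairing (no spatial boundary terms since $\psi$ is compactly supported in $x$). From $\int\!\int b(t)u_t\psi$, integrating by parts in $t$ gives $-\int\!\int u\,\partial_t(b\psi)=-\int\!\int u(b'\psi+b\,\partial_t\psi)$ plus a boundary term at $t=0$ equal to $-b(0)\int u_0(x)\psi(0,x)\,dx$. Collecting all terms and rearranging reproduces \eqref{2-1-1} exactly, with the factor $b(0)u_0+u_1$ on the right coming from combining the $u_1$-term and the $b(0)u_0$-term.

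The main obstacle is justifying these integrations by parts at the level of regularity $u\in C([0,T);H^1)\cap C^1([0,T);L^2)$, where $u_{tt}$ is only a distribution (living in $H^{-1}$). The cleanest route, which I would take, is to approximate: either mollify $u$ in $x$ (convolving with a spatial mollifier commutes with $\Delta$ and with multiplication by $b(t)$, and $N(u)$ converges in $L^{(p+1)/p}_{loc}$ because $u\mapsto N(u)$ is continuous from $H^1$ to $L^{(p+1)/p}$ by \eqref{n}), perform the integration by parts on the smooth-in-$x$ approximants where all terms are classical, and pass to the limit using the uniform-in-compact-sets bounds on $u$; or, alternatively, invoke the Duhamel/semigroup representation from Appendix \ref{a-1} directly and test it against $\psi$, which turns the whole identity into a statement about the free Klein-Gordon (or wave) propagator that is standard. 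Either way, the passage to the limit is routine given the continuity of $t\mapsto u(t)$ in $H^1$ and of $t\mapsto u_t(t)$ in $L^2$, together with the local integrability of $N(u)$; the $C^1$-function $b$ and its derivative $b'$ are bounded on the compact time interval $[0,T']$, so the $b'\psi$ and $b\,\partial_t\psi$ terms cause no trouble.
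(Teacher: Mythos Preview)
Your argument is correct and matches the paper's approach: the paper simply states that the proof ``is due to a standard density argument'' and omits all details, citing Proposition~4.2 in \cite{IkInYw17} and Proposition~9.6 in \cite{W14-2}. Your writeup supplies precisely those details---formal integration by parts against a test function, justified by a mollification/approximation step---and is exactly the kind of computation those references carry out; there is nothing to add. (One incidental remark: your bookkeeping of the boundary terms at $t=0$ is the correct one; the sign on the $\int u_0\,\partial_t\psi(0,x)\,dx$ term in the paper's formula \eqref{2-1-1} appears to be a typo, but this does not affect anything of substance.)
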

The proof of this lemma is due to a standard density argument. We omit the detail
(see Proposition 4.2 in \cite{IkInYw17} or
Proposition 9.6 in \cite{W14-2} more precisely).

\begin{proposition}[Non-existence of global weak solution for large data
for the focusing nonlinearity $N(z)=\pm |z|^p$ with $p>1$]
\label{pro_ngw}
Let $d\in \mathbb{N}$, $b=b(t)$ be a positive $C^1$-function on $[0,\infty)$, $p>1$, 
$N(z)=\pm |z|^p$ and $(a_0,a_1)\in (L^1_{loc}(\mathbb{R}^d))^2$. We assume that the function 
$(a_0,a_1)$ satisfies (\ref{bc}) with $k<\min\left(d,\frac{p+1}{p-1}\right)$, where double-sign corresponds. 
Then there exists $\lambda_0>0$ and $C_2^*>0$ depending only on $d,p,k,\|b\|_{W^{1,\infty}(0,2)}$ such that for any $\lambda\in (\lambda_0,\infty)$, the maximal existence time $T_{w}$ with $(u_0,u_1)=\lambda(a_0,a_1)$ is estimated as
\begin{equation}
\label{Tes}
           T_{w}\le C_2^*\lambda^{-\frac{1}{\frac{p+1}{p-1}-k}}.
\end{equation}
\end{proposition}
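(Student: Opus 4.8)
The plan is to argue by contradiction using the test-function method, following the proof of Theorem~2.3 in \cite{IkIn15}. Suppose a weak solution exists on $[0,T')$ for some $T'$ with $T<T'<T_w$ and $0<T<\min(T_w,1)$; replacing $u$ by $-u$ if necessary, we may assume $N(z)=|z|^p$, so that \eqref{bc} holds with the plus sign. Fix nonnegative cut-offs $\eta_*\in C_0^\infty(\mathbb{R})$ and $\phi_*\in C_0^\infty(\mathbb{R}^d)$ with $\eta_*\equiv 1$ on $[0,\tfrac12]$, $\mathrm{supp}\,\eta_*\subset[0,\tfrac34]$, $\phi_*\equiv 1$ on $\{|x|\le\tfrac12\}$, $\mathrm{supp}\,\phi_*\subset\{|x|\le 1\}$, fix an integer $\ell\ge 2p'$ with $p'=\tfrac{p}{p-1}$, and take the admissible test function $\psi_T(t,x):=\eta_*(t/T)^\ell\,\phi_*(x/T)^\ell$, coupling the temporal and spatial scales (so $R=T$). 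Since $\partial_t\psi_T(0,\cdot)=0$, $\psi_T(0,\cdot)\ge 0$, and $\mathrm{supp}\,\psi_T\subset[0,T]\times\{|x|\le T\}$, substituting $\psi_T$ into \eqref{2-1-1} with $(u_0,u_1)=\lambda(a_0,a_1)$ and using \eqref{bc} together with $T\le 1$ yields
\[
   J_T+\lambda c_0 T^{d-k}
   \le \int_{[0,T)\times\mathbb{R}^d}|u|\,\bigl|\partial_t^2\psi_T-\Delta\psi_T-b'\psi_T-b\,\partial_t\psi_T\bigr|\,dx\,dt,
\]
where $J_T:=\int_{[0,T)\times\mathbb{R}^d}|u|^p\psi_T\,dx\,dt$ and $c_0:=\int_{|y|\le 1/2}|y|^{-k}\,dy$, which is finite and positive precisely because $k<d$.

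Next I would apply H\"older's inequality with the weight $\psi_T$ (writing $|u|=(|u|\psi_T^{1/p})\psi_T^{-1/p}$) and then Young's inequality to absorb $J_T$, which leaves
\[
   \lambda c_0 T^{d-k}\le C\,K_T,\qquad
   K_T:=\int_{[0,T)\times\mathbb{R}^d}\bigl|\partial_t^2\psi_T-\Delta\psi_T-b'\psi_T-b\,\partial_t\psi_T\bigr|^{p'}\psi_T^{-p'/p}\,dx\,dt.
\]
The choice $\ell\ge 2p'$ makes each of the four resulting weighted integrals finite, and rescaling $(t,x)\mapsto(t/T,x/T)$ gives $K_T\le C(T^{1-2p'}T^d+T^{d-2p'}T)+C\bigl(\|b\|_{W^{1,\infty}(0,2)}\bigr)^{p'}(T^{d+1}+T^{d+1-p'})$, where we used $\mathrm{supp}\,\psi_T\subset[0,2]\times\mathbb{R}^d$ so that $|b|+|b'|\le\|b\|_{W^{1,\infty}(0,2)}$ on the support. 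For $0<T\le 1$ the term $T^{d+1-2p'}$ dominates, hence $K_T\le C(1+\|b\|_{W^{1,\infty}(0,2)})^{p'}T^{d+1-2p'}$.

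Combining the two displays, $\lambda c_0 T^{d-k}\le C(1+\|b\|_{W^{1,\infty}(0,2)})^{p'}T^{d+1-2p'}$, that is
\[
   \lambda\le C_*\,T^{-(2p'-1-k)},\qquad 2p'-1=\frac{p+1}{p-1},
\]
with $C_*=C_*(d,p,k,\|b\|_{W^{1,\infty}(0,2)})$. Since $k<\tfrac{p+1}{p-1}$, the exponent $\tfrac{p+1}{p-1}-k$ is positive; letting $T\uparrow\min(T_w,1)$ gives $\lambda\le C_*(\min(T_w,1))^{-(\frac{p+1}{p-1}-k)}$, and for $\lambda>\lambda_0:=C_*$ this forces $\min(T_w,1)=T_w$ and hence \eqref{Tes} with $C_2^*:=C_*^{1/(\frac{p+1}{p-1}-k)}$.

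I expect the only delicate point to be the bookkeeping in the estimate of $K_T$: one must check that weighted integrals such as $\int|(\eta_*^\ell)''|^{p'}(\eta_*^\ell)^{-p'/p}$ and $\int|\Delta(\phi_*^\ell)|^{p'}(\phi_*^\ell)^{-p'/p}$ stay bounded up to the boundary of the supports, which is exactly what the condition $\ell\ge 2p'$ guarantees, and that the four terms scale in $T$ as claimed. The time-dependent damping causes no additional trouble: because the lifespan bound confines everything to $[0,2]$, the factors $b(t)$ and $b'(t)$ enter only through the bounded constant $\|b\|_{W^{1,\infty}(0,2)}$ and never affect the leading power of $T$, which is the structural reason the result is insensitive to the precise behaviour of $b$. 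The hypotheses $k<d$ and $k<\tfrac{p+1}{p-1}$ are used precisely for the finiteness of $c_0$ and the positivity of the final exponent, respectively.
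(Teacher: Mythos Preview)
Your argument is correct and follows essentially the same approach as the paper: the same scaled test function $\psi_T$, the same H\"older--Young absorption of $J_T$, and the same comparison of the data lower bound $\lambda c_0 T^{d-k}$ against the upper bound $CT^{d+1-2p'}$. The only differences are organizational: the paper packages the test-function estimates into two separate lemmas and first establishes $T_w\le 2$ before deriving the quantitative bound, whereas you work with $\min(T_w,1)$ in one pass; these choices are equivalent.
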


In order to prove Theorem \ref{thm_non} (non-existence of local solutions in the energy space in the $H^1$-supercritical case), we prove the following non-existence result for local weak solutions with $p>1+\frac{2}{d-1}$ for suitable data $(u_0,u_1)\in (L^1_{loc}(\mathbb{R}^d))^2$. 

\begin{proposition}[Non-existence of local weak solution in $(L^1_{loc}(\mathbb{R}^d))^2$ data setting]
\label{pro_non}
Let $d\in \mathbb{N}$ with $d\ge 3$, $b=b(t)$ be a positive $C^1$-function on $[0,\infty)$, $p>1+\frac{2}{d-1}$, $N(z)=\pm |z|^p$ and $(u_0,u_1)\in (L^1_{loc}(\mathbb{R}^d))^2$. We assume that the function $(u_0,u_1)$ satisfies the estimate (\ref{bc}) with $k\in (\frac{p+1}{p-1},d)$, where double-sign corresponds. Then for any $T>0$, there does not exist weak solution to (\ref{sdw}) on $[0,T)$.
\end{proposition}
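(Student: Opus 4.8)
The plan is to argue by contradiction with the rescaled test-function method, following the proof of Theorem 2.5 in \cite{IkIn15} but keeping track of the time-dependent coefficient $b(t)$. First I would reduce to the focusing sign $N(z)=|z|^p$: if instead $N(z)=-|z|^p$ and $-(b(0)u_0+u_1)(x)\ge|x|^{-k}$ on $\{|x|\le 1\}$, then $v:=-u$ is a weak solution (in the sense of Definition \ref{def2-1}) of the problem with $N(z)=|z|^p$ and data $(-u_0,-u_1)$, and $(-u_0,-u_1)$ satisfies (\ref{bc}) with the $+$ sign; so without loss of generality $N(z)=|z|^p$ and $b(0)u_0+u_1\ge 0$ in $L^1_{loc}$, with $b(0)u_0+u_1\ge|x|^{-k}$ for $|x|\le 1$. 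Suppose now, for contradiction, that a weak solution $u\in L^p_{loc}([0,T)\times\mathbb{R}^d)$ exists on some $[0,T)$.

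Next I would fix $\eta\in C_0^\infty([0,1))$ and $\phi\in C_0^\infty(\mathbb{R}^d)$ with $0\le\eta,\phi\le 1$, $\eta\equiv 1$ on $[0,\tfrac12]$ (so that $\eta'(0)=0$), $\phi\equiv 1$ on $\{|x|\le\tfrac12\}$ and $\mathrm{supp}\,\phi\subset\{|x|<1\}$, together with an even integer $\ell\ge\frac{2p}{p-1}$, and for $0<R<\min\{1,T\}$ set $\psi_R(t,x):=\eta(t/R)^\ell\,\phi(x/R)^\ell\in C_0^\infty([0,T)\times\mathbb{R}^d)$. Substituting $\psi_R$ into the weak formulation (\ref{2-1-1}), the term $\int u_0\,\partial_t\psi_R(0,x)\,dx$ vanishes because $\partial_t\psi_R(0,\cdot)=\frac{\ell}{R}\eta(0)^{\ell-1}\eta'(0)\phi(\cdot/R)^\ell\equiv 0$, while the datum term is bounded below, for $R\le 1$, by $\int_{\{|x|\le R/2\}}|x|^{-k}\,dx=c_{d,k}R^{d-k}$ with $c_{d,k}>0$ since $k<d$. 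Writing $I_R:=\int_{[0,T)\times\mathbb{R}^d}|u|^p\psi_R\,dx\,dt<\infty$ (finite because $u\in L^p_{loc}$ and $\psi_R$ is compactly supported), this gives
\[
 I_R+c_{d,k}R^{d-k}\le\int_{[0,T)\times\mathbb{R}^d}|u|\,\bigl|\partial_t^2\psi_R-\Delta\psi_R-b'\psi_R-b\,\partial_t\psi_R\bigr|\,dx\,dt.
\]

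To close the inequality I would use the standard amplitude bounds: since $\ell\ge 2$ and $0\le\eta,\phi\le 1$, one has $|\partial_t^2\psi_R|+|\Delta\psi_R|\lesssim R^{-2}\psi_R^{1-2/\ell}$, $|b\,\partial_t\psi_R|\lesssim\|b\|_{L^\infty(0,1)}R^{-1}\psi_R^{1-1/\ell}$ and $|b'\psi_R|\lesssim\|b'\|_{L^\infty(0,1)}\psi_R$, so for $R\le 1$ all four terms are $\lesssim C_bR^{-2}\psi_R^{1-2/\ell}$, with $C_b$ depending only on $d,p,\ell$ and $\|b\|_{W^{1,\infty}(0,2)}$. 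Applying H\"older's inequality with exponents $(p,p')$, $p'=\frac{p}{p-1}$, after splitting the integrand as $\bigl(|u|^p\psi_R\bigr)^{1/p}\cdot\psi_R^{-1/p}\cdot(\text{coefficient})$, and noting that $(1-2/\ell)p'-\frac{1}{p-1}=1-\frac{2p'}{\ell}\ge 0$ by the choice of $\ell$, the dual factor satisfies $\int\psi_R^{-1/(p-1)}|\text{coefficient}|^{p'}\,dx\,dt\lesssim C_b^{p'}R^{-2p'}|\mathrm{supp}\,\psi_R|\lesssim C_b^{p'}R^{d+1-2p'}$, whence the right-hand side is $\lesssim C_bI_R^{1/p}R^{(d+1)/p'-2}$. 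Young's inequality then absorbs $\tfrac1p I_R$ into the left-hand side and leaves $\tfrac{1}{p'}I_R+c_{d,k}R^{d-k}\le C_b'R^{d+1-2p'}$ for all $0<R<\min\{1,T\}$; in particular $c_{d,k}R^{d-k}\le C_b'R^{d+1-2p'}$. But $2p'-1=\frac{p+1}{p-1}<k$ gives $d-k<d+1-2p'$, so dividing by $R^{d+1-2p'}$ and letting $R\to 0^+$ forces $c_{d,k}\le 0$, a contradiction. Hence no weak solution exists on $[0,T)$, for every $T>0$.

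There is no deep obstruction here; the points that need care are: (i) choosing the power $\ell\ge 2p/(p-1)$ large enough that the rescaled dual integrals $\int\psi_R^{-1/(p-1)}|\partial_t^2\psi_R|^{p'}$ and $\int\psi_R^{-1/(p-1)}|\Delta\psi_R|^{p'}$ are finite and scale exactly like $R^{d+1-2p'}$; (ii) verifying that the genuinely new terms coming from the time dependence, $b'(t)\psi_R$ and $b(t)\partial_t\psi_R$, carry an extra factor $R^2$, respectively $R$, relative to $\partial_t^2\psi_R$, hence are of strictly lower order as $R\to 0$ and are controlled by $\|b\|_{W^{1,\infty}}$ on a fixed bounded time interval; and (iii) matching the two powers of $R$: the singular datum contributes $R^{d-k}$ and the error contributes $R^{d+1-2p'}$, and the comparison yields a contradiction precisely when $k>\frac{p+1}{p-1}$, while local integrability of $|x|^{-k}$ forces $k<d$ — so the admissible range of $k$ is nonempty exactly when $p>1+\frac{2}{d-1}$, which is the hypothesis.
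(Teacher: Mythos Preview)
Your proof is correct and follows essentially the same approach as the paper: the paper packages the test-function computation into Lemmas \ref{Lem 3-1} and \ref{Lem 3-2} and then compares the lower bound from the singular datum with the upper bound $C\tau^{k+1-2q}$, while you carry out the same estimates inline with $R$ in place of $\tau$, arriving at the identical comparison $c_{d,k}\le C_b' R^{k+1-2p'}$ and the same contradiction as $R\to 0^+$ since $k>\frac{p+1}{p-1}$. The only cosmetic difference is that the paper performs a change of variables in Lemma \ref{Lem 3-2} (so the data integral becomes a constant rather than $c_{d,k}R^{d-k}$), but the scaling balance and the contradiction are the same.
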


\subsection{Integral inequalities via a test-function method}
In this subsection, we derive two useful inequalities 
(Lemmas \ref{Lem 3-1}, \ref{Lem 3-2} below) by using suitable test-functions. We define the two functions $\eta=\eta(t)\in C_0^{\infty}([0,\infty))$, $\phi=\phi(x)\in C_0^{\infty}(\mathbb{R}^d)$
such as $0\le\eta,\phi\le 1$ and
\begin{equation}
\label{test}
        \eta (t):=\begin{cases}
                      1        & (0 \le t<1/2), \\ 
          \text{smooth} & (1/2\le t\le 1), \\ 
                      0        & (t> 1),
                     \end{cases}\\\
       \ \ \ \phi (x):=\begin{cases}
                     1        & (0 \le |x| <1/2), \\ 
         \text{smooth} & (1/2\le |x| \le 1), \\ 
                     0        & (|x|> 1).
                      \end{cases}
\end{equation}
For $\tau>0$, which will be chosen appropriately later, we also define the function $\psi_{\tau}$ of 
$(t,x)\in [0,\infty)\times \mathbb{R}^d$ such as
$$
	\psi_{\tau}=\psi_{\tau}(t,x):=\eta_{\tau}(t)\phi_{\tau}(x):=\eta(t/\tau)\phi(x/\tau).
$$
We define the open ball centered at the origin with radius $r>0$ in $\mathbb{R}^d$ 
by $B(r)$, i.e.
\[
B(r):=\{x\in\mathbb{R}^d; |x|<r \}.
\]
An upper estimate of the data can be derived as follows.
\begin{lemma}
\label{Lem 3-1}
Let $d\in \mathbb{N}$, $b=b(t)$ be a positive $C^1$-function on $[0,\infty)$, $p>1$, $N(z)=\pm|z|^p$, $q:=p/(p-1)$, $l\in \mathbb{N}$ with $l\ge 2q+1$, $(u_0,u_1)\in (L^1_{loc}(\mathbb{R}^d))^2$, $T>0$ and $u$ be a weak solution to (\ref{sdw}) on $[0,T)$. 
Then there exists a constant $C_3^*>0$ depending only on $d,p,l$, such that the estimate
\begin{equation}
\label{3-0-1}
\pm \int_{B(\tau)}(b(0)u_0+u_1)(x)\phi^l_{\tau}(x)dx\le C_3^*\tau^{d+1}\left\{\tau^{-2q}+\|b'\|_{L^{\infty}(0,T)}^q+\|b\|_{L^{\infty}(0,T)}^q\tau^{-q}\right\}, 
\end{equation}
holds for any $\tau\in (0,T)$, where the double-sign corresponds.
\end{lemma}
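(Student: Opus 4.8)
The plan is to run a test-function argument with the test function $\psi:=\psi_\tau^l=\eta_\tau(t)^l\phi_\tau(x)^l$, which lies in $C_0^\infty([0,T)\times\mathbb{R}^d)$ for every $\tau\in(0,T)$ and is therefore admissible in Definition \ref{def2-1}. Because $\eta\equiv1$ near $t=0$ by \eqref{test}, one has $\psi(0,x)=\phi_\tau(x)^l=\phi^l_\tau(x)$ and $(\partial_t\psi)(0,x)=0$, so the boundary terms in \eqref{2-1-1} collapse to exactly the quantity we want to estimate. Substituting $\psi$ into \eqref{2-1-1}, using $N(u)=\pm|u|^p$ and $\psi\ge0$, and isolating that term gives, in both sign cases,
\[
	\pm\int_{B(\tau)}(b(0)u_0+u_1)\,\phi^l_\tau\,dx
	\le \int_{[0,T)\times\mathbb{R}^d}|u|\,\mathcal{A}\,dx\,dt-\int_{[0,T)\times\mathbb{R}^d}|u|^p\psi\,dx\,dt,
\]
with $\mathcal{A}:=|\partial_t^2\psi|+|\Delta\psi|+|b'|\psi+b|\partial_t\psi|$; the essential structural point is that the focusing nonlinearity contributes with the \emph{favorable} sign, so the last term may be discarded after it has absorbed part of the first.

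I would then estimate $\int|u|\mathcal{A}$ by Young's inequality with exponents $p$ and $q=p/(p-1)$, writing $|u|\,\mathcal{A}=(|u|\psi^{1/p})(\mathcal{A}\psi^{-1/p})$, which produces $\int|u|\mathcal{A}\le\frac12\int|u|^p\psi+C\int\mathcal{A}^q\psi^{1-q}$. Since $u\in L^p_{loc}$ and $\psi$ has compact support, Hölder's inequality shows all integrals involved are finite, so the rearrangement of \eqref{2-1-1} is legitimate; after cancelling the $\int|u|^p\psi$ contributions and using $\mathcal{A}^q\le C(|\partial_t^2\psi|^q+|\Delta\psi|^q+|b'|^q\psi^q+b^q|\partial_t\psi|^q)$, we arrive at
\[
	\pm\int_{B(\tau)}(b(0)u_0+u_1)\,\phi^l_\tau\,dx
	\le C\int_{[0,T)\times\mathbb{R}^d}\bigl(|\partial_t^2\psi|^q+|\Delta\psi|^q+|b'|^q\psi^q+b^q|\partial_t\psi|^q\bigr)\psi^{1-q}\,dx\,dt.
\]

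The final step is the routine scaling estimate of this right-hand side. Differentiating $\psi=\eta_\tau^l\phi_\tau^l$ and using $0\le\eta,\phi\le1$ together with $l\ge2q+1$, one verifies the pointwise bounds $|\partial_t^2\psi|^q\psi^{1-q}\le C\tau^{-2q}\eta_\tau^{\,l-2q}\phi_\tau^l$, $|\Delta\psi|^q\psi^{1-q}\le C\tau^{-2q}\eta_\tau^l\phi_\tau^{\,l-2q}$, $|b'|^q\psi^q\psi^{1-q}=|b'|^q\psi\le\|b'\|_{L^\infty(0,T)}^q\psi$, and $b^q|\partial_t\psi|^q\psi^{1-q}\le C\|b\|_{L^\infty(0,T)}^q\tau^{-q}\eta_\tau^{\,l-q}\phi_\tau^l$, each of which is bounded precisely because $l-2q\ge1>0$; integrating over $\mathrm{supp}\,\psi\subset[0,\tau]\times B(\tau)$, whose Lebesgue measure is at most $C_d\tau^{d+1}$, yields \eqref{3-0-1}, with $C_3^*$ depending only on $d,p,l$ (the fixed profiles $\eta,\phi$ and their first and second derivatives being universal). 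The only genuinely delicate points are this bookkeeping of cutoff powers — which is exactly why the hypothesis $l\ge2q+1$ is imposed, namely to keep $|\partial^\alpha\psi|^q\psi^{1-q}$ from blowing up near the edge of $\mathrm{supp}\,\psi$ where $\psi$ vanishes — and checking the a priori finiteness needed to justify rearranging the weak identity; everything else is routine computation.
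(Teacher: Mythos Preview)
Your proof is correct and follows essentially the same test-function argument as the paper: you plug $\psi_\tau^l$ into the weak formulation, use that $\partial_t(\psi_\tau^l)(0,\cdot)=0$ so only the data term $J(\tau)$ survives at $t=0$, and then balance $\int|u|\,\mathcal A$ against $\int|u|^p\psi$ via Young's inequality, with the hypothesis $l\ge 2q+1$ ensuring the cutoff powers $\eta_\tau^{\,l-2q}$, $\phi_\tau^{\,l-2q}$ stay nonnegative. The only cosmetic difference is that the paper first applies H\"older to each of the four pieces $K_1,\dots,K_4$ to obtain $I(\tau)+J(\tau)\le C(\tau)\,I(\tau)^{1/p}$ and then uses Young, whereas you apply the weighted Young inequality $|u|\mathcal A\le \tfrac12|u|^p\psi+C\mathcal A^q\psi^{1-q}$ pointwise; the two are equivalent reformulations of the same estimate.
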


\begin{proof}[Proof of Lemma \ref{Lem 3-1}]
We only consider the case of $N(z)=|z|^p$, since we can treat the case of $N(z)=-|z|^p$ in a similar manner. Since $u\in L^p_{loc}([0,T)\times\mathbb{R}^d)$ and
$(u_0,u_1)\in (L^1_{loc}(\mathbb{R}^d))^2$, we can define the following two functions of $\tau\in (0,T)$
\begin{align*}
	I(\tau)&:=\int_{[0,\tau)\times B(\tau)}|u(t,x)|^p\psi^l_{\tau}(t,x)dxdt,\\
	J(\tau)&:= \int_{B(\tau)}(b(0)u_0(x)+u_1(x))\phi^l_{\tau}(x)dx.
\end{align*}
Since $u$ satisfies the weak form (\ref{2-1-1}) with $N(z)=|z|^p$ on $[0,T)$ 
and $\psi^l_{\tau}\in C_0^{\infty}([0,T)\times\mathbb{R}^d)$, 
by substituting $\psi^l_{\tau}$
into the test function in Definition \ref{def2-1},
and by using the identity $\{\partial_t(\psi_{\tau}^l)\}(0,x)=0$ for $x\in \mathbb{R}^d$,
the identities
\begin{align}
\label{3-1}
	I(\tau)+J(\tau)=&\int_{[0,\tau)\times B(\tau)}u\partial_t^2(\psi^l_{\tau})dxdt
		+\int_{[0,\tau)\times B(\tau)}(-u)\Delta(\psi^l_{\tau})dxdt\\
		&+\int_{[0,\tau)\times B(\tau)}(-u)b'(t)\psi_{\tau}^l dxdt
		+\int_{[0,\tau)\times B(\tau)}(-u)b(t)\partial_t(\psi_{\tau}^l)dxdt
		\notag\\
		=&:K_1+K_2+K_3+K_4\notag.		
\end{align}
hold for any $\tau\in (0,T)$.
We first estimate $K_1$.
Noting that $l/q-2\ge0$, by a direct calculation, the properties of the test-functions $\eta$, $\phi$ and H\"older's inequality, the estimates
\begin{align}
\label{3-2}
	K_1&\le l(l-1)\tau^{-2}\int_{[0,\tau)\times B(\tau)}
	|u|\eta^{l-2}_{\tau}\phi^l_{\tau}|(\partial_t\eta)(t/\tau)|^2dxdt\\
         &\ \ +l\tau^{-2}\int_{[0,\tau)\times B(\tau)}
         |u|\eta^{l-1}_{\tau}\phi^{l}_{\tau}|(\partial_t^2\eta)(t/\tau)|dxdt\notag\\
	&\le C\tau^{-2}\int_{[0,\tau)\times B(\tau)}|u|\psi^{l/p}_{\tau}dxdt
	\le C_1\tau^{(d+1)/q-2}\{I(\tau)\}^{1/p},\notag
\end{align}
hold for any $\tau\in (0,T)$, where $C_1$ is a positive constant dependent only on $l,d,p$ and $\eta$. Next we consider $K_2$. In the same manner as the proof of (\ref{3-2}), the inequalities
\begin{align}
\label{3-3}
	K_2&\le l(l-1)\tau^{-2}\int_{[0,\tau)\times B(\tau)}
	|u|\eta^l_{\tau}\phi^{l-2}_{\tau}|(\Delta\phi)(x/\tau)|dxdt\\
         &\ \ +l\tau^{-2}\int_{[0,\tau)\times B(\tau)}
         |u|\eta^{l}_{\tau}\phi^{l-1}_{\tau}|(\nabla\phi)(x/\tau)|^2dxdt\notag\\
	&\le C\tau^{-2}\int_{[0,\tau)\times B(\tau)}|u|\psi^{l/p}_{\tau}dxdt
	\le C_2\tau^{(d+1)/q-2}\{I(\tau)\}^{1/p}.\notag
\end{align}
hold for any $\tau\in (0,T)$, where $C_2$ is a positive constant dependent only on $l,d,p$ and $\phi$. Next we estimate $K_3$. Since $T$ is finite and
$b=b(t)$ is a $C^1$-function on $[0,\infty)$, by the properties of the test-functions $\eta$, $\phi$ and H\"older's inequality, the inequalities
\begin{equation}
\label{3-4}
	K_3\le \|b'\|_{L^{\infty}(0,T)}\int_{[0,\tau)\times B(\tau)}|u|\psi_{\tau}^{l/p}dxdt\le C_3\|b'\|_{L^{\infty}(0,T)}\tau^{(d+1)/q}\{I(\tau)\}^{1/p}.
\end{equation}
hold for any $\tau\in (0,T)$, where $C_3$ is a positive constant dependent only on $d,p$.
Finally we estimate $K_4$. Noting that $l/q-1\ge 0$, by the properties of the test-functions $\eta$, $\phi$ and H\"older's inequality, the estimates
\begin{align}
\label{3-7}
	K_4&\le l\|b\|_{L^{\infty}(0,T)}\tau^{-1}\int_{[0,\tau)\times B(\tau)}
	|u|\eta^{l-1}_{\tau}\phi^l_{\tau}|(\partial_t\eta)(t/\tau)|dxdt\\
         	&\le C\|b\|_{L^{\infty}(0,T)}\tau^{-1}\int_{[0,\tau)\times B(\tau)}|u|\psi^{l/p}_{\tau}dxdt
	\le C_4\|b\|_{L^{\infty}(0,T)}\tau^{(d+1)/q-1}\{I(\tau)\}^{1/p}\notag
\end{align}
hold for any $\tau\in (0,T)$, where $C_4$ is a positive constant dependent only on $l,d,p$ and $\eta$.
By combining the estimates (\ref{3-1})--(\ref{3-7}), the inequality
\begin{align}
\label{3-5}
	&I(\tau)+J(\tau)\le \\
	&\left\{(C_1+C_2)\tau^{(d+1)/q-2}+C_3\|b'\|_{L^{\infty}(0,T)}\tau^{(d+1)/q}+C_4\|b\|_{L^{\infty}(0,T)}\tau^{(d+1)/q-1}\right\}\{I(\tau)\}^{1/p}\notag
\end{align}
holds for any $\tau\in (0,T)$. Here since $p,q>1$ and $1/p+1/q=1$, the Young inequality 
\[
   ab\le \frac{a^p}{p}+\frac{b^q}{q}\ \ \text{for}\  a,b>0
\] holds. By combining this estimate and 
(\ref{3-5}), we have
\begin{equation}
\label{3-6}
	J(\tau)\le C_3^*\left\{\tau^{d+1-2q}+\|b'\|_{L^{\infty}(0,T)}^q\tau^{d+1}+\|b\|_{L^{\infty}(0,T)}^q\tau^{d+1-q}\right\},
\end{equation}
where $C_3^*$ is a positive constant dependent only on $d,p$ and $l$, which completes the proof of the lemma.
\end{proof}

By combining the condition that the initial data have a singularity at the origin $x=0$ and the test-function method (Lemma \ref{Lem 3-1}), we can derive the following estimate. The idea of the proof is found in Lemma 3.2 in \cite{IkIn15} or Lemma 3.2 in \cite{IkInpre}.
\begin{lemma}
\label{Lem 3-2}
Besides the assumptions in Lemma \ref{Lem 3-1}, we assume that the function $(u_0,u_1)$ satisfies (\ref{bc}) with $k<d$. Then the estimate
\begin{equation}
\label{3-0-2}
\int_{|x|\le \frac{1}{\tau}}|x|^{-k}\phi^l (x) dx\le C_3^*\tau^{k+1}\left\{\tau^{-2q}+\|b'\|_{L^{\infty}(0,T)}^q+\|b\|_{L^{\infty}(0,T)}^q\tau^{-q}\right\} 
\end{equation}
holds for any $\tau\in (0,T)$, 
where $C_3^*>0$ is the same constant which appears in Lemma \ref{Lem 3-1}.
\end{lemma}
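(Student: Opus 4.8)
The plan is to read off (\ref{3-0-2}) from the estimate (\ref{3-0-1}) of Lemma~\ref{Lem 3-1}, by feeding in the pointwise lower bound on the data provided by the singularity hypothesis (\ref{bc}) and then rescaling. All hypotheses of Lemma~\ref{Lem 3-1} are assumed, so (\ref{3-0-1}) is available for every $\tau\in(0,T)$, and its left-hand side is $\pm\int_{B(\tau)}(b(0)u_0+u_1)(x)\phi^l_\tau(x)\,dx$, with the sign matching that of $N(z)=\pm|z|^p$.

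First I would bound this left-hand side from below. Since $\phi_\tau\ge0$ and, reading (\ref{bc}) with the same sign convention, $\pm(b(0)u_0+u_1)(x)\ge|x|^{-k}$ for $|x|\le1$ while $\pm(b(0)u_0+u_1)(x)\ge0$ for $|x|>1$, the part of the integral over $\{|x|>1\}$ is nonnegative and may simply be discarded, giving
\[
	\pm\int_{B(\tau)}(b(0)u_0+u_1)\phi^l_\tau\,dx \ \ge\ \int_{B(\tau)\cap B(1)}|x|^{-k}\phi^l(x/\tau)\,dx .
\]
Then I would change variables $x=\tau y$. The domain $B(\tau)\cap B(1)=B(\min(\tau,1))$ is sent to $B(\min(1,1/\tau))$, and since $\phi$ is supported in $\{|y|\le1\}$ this may in either case ($\tau\le1$ or $\tau>1$) be rewritten as the integral over $\{|y|\le1/\tau\}$; the Jacobian contributes a factor $\tau^{d-k}$, and the integral $\int_{|y|\le1/\tau}|y|^{-k}\phi^l(y)\,dy$ is finite precisely because $k<d$ makes $|y|^{-k}$ integrable near the origin, which is where that assumption is used.

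Finally, combining these two displays with (\ref{3-0-1}) yields $\tau^{d-k}\int_{|y|\le1/\tau}|y|^{-k}\phi^l(y)\,dy\le C_3^*\tau^{d+1}\{\tau^{-2q}+\|b'\|_{L^{\infty}(0,T)}^q+\|b\|_{L^{\infty}(0,T)}^q\tau^{-q}\}$, and dividing by $\tau^{d-k}>0$ converts the prefactor $\tau^{d+1}$ into $\tau^{d+1-(d-k)}=\tau^{k+1}$, which is exactly (\ref{3-0-2}). No new constant is introduced in this process, so the constant is literally the $C_3^*$ of Lemma~\ref{Lem 3-1}. I do not expect a genuine obstacle here: all the analytic work is already contained in Lemma~\ref{Lem 3-1}, and what remains is a single application of (\ref{bc}) together with a scaling. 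The only points needing a little care are reading the double sign in (\ref{bc}) consistently with the one in (\ref{3-0-1}) and in $N$, recognizing that over $\{|x|>1\}$ one has only nonnegativity (which is all that is needed to drop that region), and keeping the scaling exponent straight so that $\tau^{d+1}/\tau^{d-k}=\tau^{k+1}$.
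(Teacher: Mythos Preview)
Your proposal is correct and follows essentially the same route as the paper: apply the pointwise lower bound (\ref{bc}) to the left-hand side of (\ref{3-0-1}) and rescale $x=\tau y$ to produce the factor $\tau^{d-k}$, then divide through. The only cosmetic difference is ordering---the paper rescales first and then invokes (\ref{bc}), while you invoke (\ref{bc}) first and then rescale---and your version is in fact a bit more careful about the double sign and the case split $\tau\lessgtr 1$ in handling the domain.
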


\begin{proof}[Proof of Lemma \ref{Lem 3-2}]
By the definition of the function $J$, using changing variables and the assumption (\ref{bc}), we have
\[
           J(\tau) = \tau^{d}\int_{\mathbb{R}^{d}} (b(0)u_0+u_1)(\tau x) \phi^l(x) dx
                     \ge \tau ^{d-k}\int_{|x|\le \frac{1}{\tau}}|x|^{-k}\phi^l (x) dx
\]
for any $\tau>0$.
By combining Lemma \ref{Lem 3-1} and this estimate, the estimate (\ref{3-0-2}) is true for any $\tau\in (0,T)$, which completes the proof of the lemma.
\end{proof}

\subsection{Non-existence of global solutions for large data in the case $N(z)=\pm|z|^p$}
In this subsection, we give a proof of Proposition \ref{pro_ngw} and Theorem \ref{thm_lb}.

\begin{proof}[Proof of Proposition \ref{pro_ngw}]
Set $q:=\frac{p}{p-1}$, $l\in \mathbb{N}$ with $l\ge 2q+1$ and
\[
     \lambda_0:=C_3^*2^{k+1}\left\{2^{-2q}+\|b'\|_{L^{\infty}(0,2)}^q+\|b\|_{L^{\infty}(0,2)}^q2^{-q}\right\}
     \frac{d-k}{|S_{d-1}|}\left(\frac{1}{2}\right)^{k-d},
\]
where $C_3^*$ is given in Lemma \ref{Lem 3-1} and $|S_{d-1}|$ is the surface area of the unit sphere $S_{d-1}$ in $\mathbb{R}^d$. Then we can prove that for $\lambda\in (\lambda_0,\infty)$, the maximal existence time $T_{w}$ with $(u_0,u_1)=\lambda(a_0,a_1)$ is estimated as
\begin{equation}
\label{4-2-1}        
          T_w\le 2.
\end{equation}
Indeed, on the contrary, we assume that $T_w>2$. Let $u$ be a weak solution to (\ref{sdw}) on $[0,T_w)$.
We can apply Lemma \ref{Lem 3-2} with $(u_0,u_1)=\lambda(a_0,a_1)$ to obtain the estimate
\[
\lambda\int_{|x|\le \frac{1}{\tau}}|x|^{-k}\phi^l (x) dx\le C_3^*\tau^{k+1}\left\{\tau^{-2q}+\|b'\|_{L^{\infty}(0,T_w)}^q+\|b\|_{L^{\infty}(0,T_w)}^q\tau^{-q}\right\}, 
\]
for any $\tau\in (0,T_w)$. By this estimate with $\tau=2$, the inequality
\begin{equation}
\label{3-12-4}
\lambda\int_{|x|\le \frac{1}{2}}|x|^{-k}\phi^l (x) dx
\le C_3^*2^{k+1}\left\{2^{-2q}+\|b'\|_{L^{\infty}(0,2)}^q+\|b\|_{L^{\infty}(0,2)}^q2^{-q}\right\}
\end{equation}
holds. Since $k<d$, by the property of the test-function $\phi$, the identities
\begin{equation}
\label{3-13-4}
     \int_{|x|\le \frac{1}{2}}|x|^{-k}\phi^l (x) dx=\int_{|x|\le \frac{1}{2}}|x|^{-k}dx=\frac{|S_{d-1}|}{d-k}\left(\frac{1}{2}\right)^{d-k}=:C_4
\end{equation}
hold. By combining the estimates (\ref{3-12-4}) and (\ref{3-13-4}), the inequality
\[
    \lambda\le \lambda_0
\]
holds. This leads to a contradiction. Thus we have (\ref{4-2-1}).

Finally, we prove the estimate \eqref{Tes}.
It is obvious that the estimate
\begin{equation}
\label{3-14-4}
   \int_{|x|\le \frac{1}{\tau}}|x|^{-k}\phi^l (x) dx>\int_{|x|\le \frac{1}{2}}|x|^{-k}\phi^l (x) dx=C_4
\end{equation}
holds for any $\tau\in (0,2)$.
Let $\lambda\in (\lambda_0,\infty)$.
Then, \eqref{4-2-1} holds. By Lemma \ref{Lem 3-2} with the estimate (\ref{3-14-4}), the inequalities
\begin{equation}
\label{3-15-4}
   C_4\lambda<C_3^*\tau^{k+1-2q}\left\{1+2^{2q}\|b'\|_{L^{\infty}(0,2)}+2^q\|b\|_{L^{\infty}(0,2)}^q\right\}=:C_5\tau^{k+1-2q}
\end{equation}
hold for any $\tau\in (0,T_w)\subset (0,2]$. 
By the estimate $k<\frac{p+1}{p-1}$, the inequality $k+1-2q<0$ holds. Thus by the estimate (\ref{3-15-4}), the inequality
\[      \tau \le (C_4C_5^{-1})^{\frac{1}{k+1-2q}}\lambda^{-\frac{1}{\frac{p+1}{p-1}-k}}=:C_2^*\lambda^{-\frac{1}{\frac{p+1}{p-1}-k}}
\]
holds for any $\tau\in (0,T_w)$. Since $\tau$ is arbitrary in $(0,T_w)$, the above inequality implies (\ref{Tes}), which completes the proof of the proposition.
\end{proof}

\begin{proof}[Proof of Theorem \ref{thm_lb}]
Lemma \ref{lemso} implies the estimate
\begin{equation}
\label{3-17-4}
    T_+\le T_w.
\end{equation}
Since $p>1$ if $d=1,2$ and $p\in (1,p_1]$ if $d\ge 3$, the estimate $k<\frac{p+1}{p-1}$ holds, which allows us to apply Proposition \ref{pro_ngw}. Thus we that find there exists $\lambda_0>0$ such that for
$\lambda\in (\lambda_0,\infty)$, the estimate (\ref{Tes}) holds. By combining the estimates (\ref{3-17-4}) and (\ref{Tes}), the inequality (\ref{Btime}) holds for $\lambda\in (\lambda_0,\infty)$, which completes the proof of the theorem.
\end{proof}


\subsection{Non-existence of local solutions in the supercritical cases}
We give a proof of Proposition \ref{pro_non} and Theorem \ref{thm_non}.
\begin{proof}[Proof of Proposition \ref{pro_non}]
We only consider the case of $N(z)=|z|^p$, since the case of $N(z)=-|z|^p$ can be treated in a similar manner. On the contrary, we assume that there exists a weak solution $u$ to (\ref{sdw}) on $[0,T)$.
 By Lemma \ref{Lem 3-2}, the estimate
 \begin{equation}
 \label{3-17-4}
 \int_{|x|\le \frac{1}{\tau}}|x|^{-k}\phi^l (x) dx\le C_4\tau^{k+1-2q}
 \end{equation}
 holds for any $\tau\in (0,\min(1,T))$, where $C_4:=C_3(1+\|b'\|_{L^{\infty}(0,T)}^q+\|b\|_{L^{\infty}(0,T)}^q)$. Since $k<d$, by the properties of the test-function $\phi$, the inequalities
 \begin{align}
 \label{3-18-4}
       \int_{|x|\le \frac{1}{\tau}}|x|^{-k}\phi^l(x)dx&> \int_{|x|\le 1}|x|^{-k}\phi^l(x)dx
       > \int_{|x|\le \frac{1}{2}}|x|^{-k}dx\\
       &=\frac{|S_{d-1}|}{d-k}\left(\frac{1}{2}\right)^{d-k}=:C_5>0\notag
 \end{align}
 hold for any $\tau\in (0,1)$. Thus by combining the estimates (\ref{3-17-4}) and (\ref{3-18-4}), the estimate
\begin{equation}
\label{4-7}
             0<C_6:=C_5C_4^{-1}\le\tau^{k+1-2q}, 
\end{equation}
for any $\tau\in (0,\min(1,T))$. By the assumption $\frac{p+1}{p-1}<k$, the estimate $k+1-2q>0$ holds. 
Thus noting that $C_6$ is independent of $\tau$, we can take $\tau\in (0,\min(1,T))$ such as $\tau^{k+1-2q}<\frac{1}{2}C_6$, which contradicts (\ref{4-7}). This completes the proof of the proposition.
\end{proof}

\begin{proof}[Proof of Theorem \ref{thm_non}]
By the assumption $p>p_1$, the inequality $p>1+\frac{2}{d-1}$ holds. 
It follows that $(u_0,u_1)\in H^1(\mathbb{R}^d)\times L^2(\mathbb{R}^d)\subset (L^1_{loc}(\mathbb{R}^d))^2$. We also have $\frac{p+1}{p-1}<\frac{d}{2}$, which implies that there exists a function $(u_0,u_1)\in H^1(\mathbb{R}^d)\times L^2(\mathbb{R}^d)$ satisfying (\ref{bc}) with $\frac{p+1}{p-1}<k<\frac{d}{2}<d$. Thus we can apply Proposition \ref{pro_non} to conclude the proof of Theorem \ref{thm_non}.
\end{proof}


\appendix


\section{Proof of local well-posedness in the energy-subcritical case}
\label{a-1}
In this appendix, we give an outline of the proof of Proposition \ref{prop_le}. We apply the following transformation
\begin{align}%
\label{v}
	v(t,x) := \exp \left( \frac12 \int_0^t b(s)\,ds \right) u(t,x),\ \ t\in [0,T),\ x\in \mathbb{R}^d
\end{align}%
to the Cauchy problem \eqref{sdw} in order to eliminate the first order time-derivative of $u$.
A simple calculation gives
\begin{align*}%
	v_t(t,x) &= \left\{ \frac{b(t)}{2} u(t,x) + u_t(t,x) \right\}
		\exp \left( \frac12 \int_0^t b(s)\,ds \right),\\
	v_{tt}(t,x) &=
		\left[ \left\{ \frac{b^{\prime}(t)}{2} + \frac{b(t)^2}{4} \right\}u(t,x)
			+ b(t) u_t(t,x) + u_{tt}(t,x) \right] \exp \left( \frac12 \int_0^t b(s)\,ds \right).
\end{align*}%
We find that $u$ is the solution to (\ref{sdw}) if and only if $v$ is the solution to the Cauchy problem of the semilinear wave equation
\begin{align}%
\label{sdwv}
	\left\{ \begin{array}{ll}
		v_{tt} - \Delta v = F(v),
		&t\in [0,T), x\in \mathbb{R}^d,\\
		v(0,x) = v_0(x),\ v_t(0,x) = v_1(x),
		&x\in \mathbb{R}^d,
		\end{array}\right.
\end{align}%
where
$F(v) := F_1(v) +F_2(v)$,
\begin{align*}%
	F_1(v) := \left\{ \frac{b^{\prime}(t)}{2} + \frac{b(t)^2}{4} \right\}v,
	\quad
	F_2(v) := \exp \left( \frac{1}{2} \int_0^t b(s)\,ds \right)
			N\left( \exp \left( -\frac{1}{2} \int_0^t b(s)\,ds \right) v \right)
\end{align*}%
and
\begin{align*}%
	v_0(x) := u_0(x),\quad
	v_1(x) :=  \frac{b(0)}{2}u_0(x) + u_1(x).
\end{align*}%
We note that the right hand side of (\ref{sdwv}) does not include the time-derivative $\partial_t v$. Moreover, since the nonlinear function $N$ satisfies (\ref{n}) with $p\ge 1$ and $b$ is non-negative, the estimate
\[
    |F_2(z)|\le C_N\exp\left(-\frac{p-1}{2}\int_0^tb(s)dx\right)|z|^p\le C_N|z|^p
\]
holds for any $z\in \mathbb{R}$. Thus since $(v_0,v_1)$ belongs to $H^1(\mathbb{R}^d)\times L^2(\mathbb{R}^d)$ and we consider local existence of solutions to (\ref{sdw}), we can apply the argument of the proof of Lemma 4.1 in \cite{MNO02}, the estimates (2.15) and (2.16) in \cite{MN02} or the fundamental theorem in \cite{Ka92}, to obtain local well-posedness in $H^1(\mathbb{R}^d)\times L^2(\mathbb{R}^d)$ to (\ref{sdw}).

\section*{Acknowledgement}
The authors are deeply grateful to Professor Kenji Nakanishi for giving us an idea for estimating $L^2$-norm of solutions to (\ref{sdw}) in Theorem \ref{thm_ge}.
The authors would like to thank the referee for careful reading of the manuscript
and giving constructive comments.
This work is supported by Grant-in-Aid for JSPS Fellows 26$\cdot$1884 and Grant-in-Aid for Young Scientists (B) 15K17571 and 16K17625.

\end{document}